\newtheorem{thm}{Theorem}[section]
\newtheorem{obs}[thm]{Observation}
\newtheorem{cor}[thm]{Corollary}
\newtheorem{prob}[thm]{Problem}
\newtheorem{conj}[thm]{Conjecture}
\def\imod#1{\allowbreak\mkern10mu({\operator@font mod}\,\,#1)}
\newcommand{\cQ}{\mathcal{Q}}
\newcommand{\zet}{\mathbb{Z}}
\def\gcd{\mathop{\rm gcd}\nolimits}
\newcommand{\gA}{\mathcal{A}}
\def\lcm{\mathop{\rm lcm}\nolimits}
\begin{document}
\title{Group distance magic Cartesian product of two cycles}

\author{Sylwia Cichacz$^{1}$\footnote{This work was partially supported by the Faculty of Applied Mathematics AGH UST statutory tasks within subsidy of Ministry of Science and Higher Education.}, Pawe{\l} Dyrlaga$^1$, Dalibor Froncek$^2$\\
$^1$AGH University of Science and Technology, Poland\\
$^2$University of Minnesota Duluth, U.S.A.}

\maketitle
\begin{abstract}
Let $G=(V,E)$ be a graph and $\Gamma $ an Abelian group both of order $n$. A
$\Gamma$-distance magic labeling of $G$ is a bijection $\ell \colon
V\rightarrow \Gamma $ for which there exists $\mu \in \Gamma $ such that $%
\sum_{x\in N(v)}\ell (x)=\mu $ for all $v\in V$, where $N(v)$ is the
neighborhood of $v$. Froncek 
showed that the Cartesian product $C_m \square C_n$, $m, n\geq3$ is a $\zet_{mn}$-distance magic graph if and only if  $mn$ is even. It is also known that if $mn$ is even then $C_m \square C_n$ has $\zet_{\alpha}\times \gA$-magic labeling for any $\alpha \equiv 0 \pmod {\lcm(m,n)}$ and any Abelian group $\gA$ of order $mn/\alpha$. 
However,  the full characterization of group distance magic Cartesian product of two cycles is still unknown.\\

In the paper we  make progress towards the complete solution this problem by proving some necessary conditions. We further prove that for $n$ even  the graph $C_{n}\square C_{n}$ has a $\Gamma$-distance magic labeling for  any Abelian group $\Gamma$ of order $n^{2}$. Moreover we show that if  $m\neq n$, then there does not exist a $(\zet_2)^{m+n}$-distance magic labeling of the Cartesian product   $C_{2^m} \square  C_{2^{n}}$. We also give necessary and sufficient condition for  $C_{m} \square  C_{n}$ with $\gcd(m,n)=1$  to   be $\Gamma$-distance magic.
\end{abstract}

 \section{Introduction}\label{sec:intro}
 All graphs $G=(V,E)$ are finite undirected simple graphs. For standard graph theoretic notation and definitions we refer to Diestel \cite{Diest}.

 In 1963 Sedl\'{a}\v{c}ek  \cite{ref_Sed} noticed the following connection between a~magic square $M$ of size $n\times n$ and an edge labeling of the complete bipartite graph $K_{n,n}$. Namely, assigning the entry in row $i$ and column $j$ of the magic square to the edge connecting the $i$-th vertex in one partite set to the $j$-th vertex in the other set,  we obtain the sum of labels of edges incident with  each vertex equal to the magic square constant. This type of labeling became known as \emph{magic labeling}, \emph{supermagic labeling}  or \emph{vertex-magic edge labeling}. Another concept in graph labeling that was motivated by the construction of magic squares labels vertices instead. A \emph{distance magic labeling} of a graph $G$ of order $n$ is a bijection $\ell:V\rightarrow\{1,2,\dots,n\}$ with the property that there exists a positive integer $\mu$ such that
 $$w(v)=\sum_{u\in N(v)}\ell(u)=\mu$$
 for all $v\in V$, where $N(v)$ is the open neighborhood of $v$ and  $w(v)$ is called the \emph{weight of the vertex} $v$. The constant $\mu$ is called the \emph{magic constant} of the labeling $f$. Any graph admitting a distance magic labeling is called a \emph{distance magic graph}.

We recall one out of four standard graph products (see \cite{IK}). Let $G$
and $H$ be two graphs. The \emph{Cartesian product} $G\square H$ is a  graph with vertex set $V(G)\times V(H)$. Two vertices $(g,h)$ and $(g^{\prime },h^{\prime })$ are adjacent  if and only if either $g=g^{\prime }$ and $h$ is adjacent
with $h^{\prime }$ in $H$, or $h=h^{\prime }$ and $g$ is adjacent with $
g^{\prime }$ in $G$.

Rao at al. proved the following result for Cartesian product of cycles in \cite{RSP}.
\begin{thm}[\cite{RSP}]\label{thm:cart_cycle_integers}The Cartesian product $C_m \square C_n$, $ m,n\geq3$ is a distance magic graph if and only if
$m,n \equiv 2\imod 4$ and $m=n$.
\end{thm}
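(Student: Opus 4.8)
\medskip
\noindent\emph{Proof strategy.} The plan is to turn everything into harmonic analysis on the torus $\mathbb{Z}_m\times\mathbb{Z}_n$. Identify a labelling $\ell$ of $C_m\square C_n$ with a function $\mathbb{Z}_m\times\mathbb{Z}_n\to\mathbb{Z}$ and rewrite the magic condition as $A\ell=\mu\mathbf 1$, where $A=X+X^{-1}+Y+Y^{-1}$ and $X,Y$ are the two coordinate shifts; summing all weights immediately forces $\mu=2(mn+1)$. The characters $\chi_{a,b}(i,j)=\omega_m^{ai}\omega_n^{bj}$ diagonalise $A$ with eigenvalue $\lambda_{a,b}=2\cos\frac{2\pi a}{m}+2\cos\frac{2\pi b}{n}$, so $A\ell=\mu\mathbf 1$ forces $\widehat\ell(a,b)=0$ for every $(a,b)\ne(0,0)$ with $\lambda_{a,b}\ne0$. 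Thus $\widehat\ell$ is supported on $\{(0,0)\}\cup Z$, where $Z=\{(a,b)\ne(0,0):\lambda_{a,b}=0\}$, and by Niven's theorem on rational values of the cosine, $(a,b)\in Z$ exactly when $\frac am+\frac bn\equiv\frac12\pmod1$ or $\frac am-\frac bn\equiv\frac12\pmod1$, i.e.\ $2(an\pm bm)\equiv mn\pmod{2mn}$. These congruences have no solution when $mn$ is odd, so then $Z=\varnothing$, $\ell$ is constant, and no labelling exists; hence one may assume $mn$ even.

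\medskip
\noindent All the remaining necessary conditions should come from the fact that $\ell$ is a \emph{bijection}. First, $\ell$ is constant on every coset of the annihilator $H\le\mathbb{Z}_m\times\mathbb{Z}_n$ of $\operatorname{supp}\widehat\ell$, so injectivity forces $H=0$, i.e.\ $Z$ must generate $\mathbb{Z}_m\times\mathbb{Z}_n$. I would then analyse $2(an\pm bm)\equiv mn\pmod{2mn}$ prime by prime: comparing $p$-adic valuations of $an$, $bm$ and $mn$ restricts which residues $a$ (and $b$) can occur in $Z$, and the generation requirement should force $v_p(m)=v_p(n)$ for every prime $p$, hence $m=n$; I expect this to be the subtlest point of the whole argument, and the place where the exact characterisation really lives. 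Writing then $m=n=2s$, one checks directly that every $(a,b)\in Z$ satisfies $a\equiv b\pmod2$ as soon as $s$ is even, so each character in the support is invariant under the fixed-point-free involution $\tau\colon(i,j)\mapsto(i+s,j+s)$; then $\ell=\ell\circ\tau$, again contradicting injectivity. Therefore $s$ is odd, i.e.\ $m=n\equiv2\pmod4$, which gives the ``only if'' direction.

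\medskip
\noindent For the ``if'' direction the plan is to write down an explicit labelling of $C_{2s}\square C_{2s}$ for odd $s$. Since $\mu$ is already forced, \emph{any} $\ell$ with $\operatorname{supp}\widehat\ell\subseteq\{(0,0)\}\cup Z$ automatically has all weights equal, so the only thing left is to make $\ell$ a bijection onto $\{1,\dots,mn\}$; equivalently one needs a half-integer-valued element of $\ker A$ taking each value $\pm\frac12,\pm\frac32,\dots,\pm\frac{mn-1}{2}$ exactly once. A natural route is to look for a labelling adapted to the splitting $\mathbb{Z}_{2s}\cong\mathbb{Z}_2\times\mathbb{Z}_s$ (a small $2\times2$-type core blown up by arithmetic progressions of step $s$ along diagonals $i\pm j=\mathrm{const}$), the oddness of $s$ being exactly what lets the blown-up copies interleave without collision; bijectivity is then a finite case check on the residues of $i\pm j$ modulo small numbers. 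Alternatively one can try to lift Froncek's $\mathbb{Z}_{mn}$-distance magic labelling by checking that for $m=n\equiv2\pmod4$ its neighbour sums never ``wrap around''. I expect the two genuinely hard points to be (a) extracting $m=n$ from the generation condition on $Z$, and (b) the bijectivity verification for the construction; the spectral bookkeeping that sets everything up is routine.
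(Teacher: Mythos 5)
The paper does not prove this statement at all: it is quoted from Rao, Singh and Parameswaran \cite{RSP}, so there is no in-paper proof to compare against (the closest relatives are the diagonal-telescoping arguments of Theorems~\ref{thm:main-f} and~\ref{thm:gcd}, which are the ``physical-space'' counterparts of your Fourier-support argument, but applied to group-valued labelings). Judged on its own, your spectral setup is correct: $\mu=2(mn+1)$ is forced, $\widehat\ell$ must be supported on $\{(0,0)\}\cup Z$ with $Z=\{(a,b)\neq(0,0):\frac am\pm\frac bn\equiv\frac12\pmod 1\}$ (no appeal to Niven is needed here --- $\cos\alpha+\cos\beta=2\cos\frac{\alpha+\beta}{2}\cos\frac{\alpha-\beta}{2}$ does it), the case $mn$ odd follows, and the involution obstruction for $m=n\equiv0\pmod4$ checks out: there every $(a,b)\in Z$ has $a\equiv b\pmod 2$, so $\ell$ is invariant under $(i,j)\mapsto(i+s,j+s)$.

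The genuine gap is step (a), which you yourself flag as the crux: the claim that the generation condition on $Z$ forces $m=n$ is \emph{false}. Take $m=3$, $n=6$. Then $\lambda_{a,b}=0$ exactly for $(0,3)$ and for $(a,b)$ with $a\in\{1,2\}$, $b\in\{1,5\}$; in particular $(1,1)$ and $(2,1)$ lie in $Z$, their difference is $(1,0)$, and together with $(0,3)$ and $(1,1)$ one generates all of $\zet_3\times\zet_6$. The same happens for $m=5$, $n=10$, etc. So the annihilator/periodicity obstruction does not rule out these products, yet the theorem asserts they are not distance magic. This failure is structural, not a fixable detail: by Theorem~\ref{thm:cart_cycle_cyclic} the graph $C_3\square C_6$ \emph{is} $\zet_{18}$-distance magic, so no obstruction that only uses the additive structure of the label set and injectivity can separate it from the distance magic case; you must use that the labels are the specific integers $1,\dots,mn$ (e.g.\ counting how many unordered pairs of distinct labels can realize the forced pairwise sums along diagonals --- the kind of arithmetic the published combinatorial proofs rely on). Separately, your ``if'' direction is only a plan: the entire content there is the bijectivity of an explicit labeling of $C_{2s}\square C_{2s}$ for $s$ odd, and no candidate labeling is actually written down or verified. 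As it stands, the proposal establishes the $mn$ odd case and the $m=n\equiv0\pmod 4$ exclusion, but neither the exclusion of $m\neq n$ nor the existence half.
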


Assume $\Gamma$ is a finite Abelian group of order $n$ with the operation denoted by $+$.  For convenience we will write $ka$ to denote $a + a + \ldots + a$ (where the element $a$ appears $k$ times), $-a$ to denote the inverse of $a$, and use $a - b$ instead of $a+(-b)$.  Moreover, the notation $\sum_{a\in S}{a}$ will be used as a short form for $a_1+a_2+a_3+\dots+a_t$, where $a_1, a_2, a_3, \dots,a_t$ are all elements of the set $S$. The identity element of $\Gamma$ will be denoted by $0$.
Recall that any group element $\iota\in\Gamma$ of order 2 (i.e., $\iota\neq 0$ and $2\iota=0$) is called an \emph{involution}.

The magic labeling (in the classical point of view)  with labels being the elements of an Abelian group has been studied for a long time (see papers by  Stanley~\cite{ref_Sta,ref_Sta2}). Therefore, it was a natural step to label the vertices of a graph $G$ with elements of an Abelian group also in the case of distance magic labeling. This concept was introduced by~Froncek in~\cite{Fro1}.\\

A $\Gamma$\emph{-distance magic labeling} of a graph $G = (V, E)$ with $|V| = n$ is a bijection $\ell$ from $V$ to an Abelian group $\Gamma$ of order $n$
such that the weight $w(v) =\sum_{u\in N(v)}\ell(u)$ of every vertex $v \in V$ is equal to the same element $\mu\in \Gamma$, called the \emph{magic
constant}. A graph $G$ is called a \emph{group distance magic graph} if there exists a $\Gamma$-distance magic labeling for every Abelian
group $\Gamma$ of order $|V(G)|$.\\

First result on $\Gamma$-distance magic labeling of Cartesian product of cycles was proved by~\cite{Fro1}:

\begin{thm}\label{thm:cart_cycle_cyclic}\emph{(\cite{Fro1})} The Cartesian product $C_m \square C_n$, $m, n\geq3$ is $\zet_{mn}$-distance magic graph if and only if
 $mn$ is even.
\end{thm}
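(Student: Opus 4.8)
The plan is to establish the two implications separately. The ``only if'' direction is a short argument in linear algebra over the ring $\zet_{mn}$; the ``if'' direction requires an explicit labeling, and that is where the real difficulty lies.

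\textbf{Necessity.} Suppose $C_m\square C_n$ admits a $\zet_{mn}$‑distance magic labeling $\ell$, and assume toward a contradiction that $mn$ is odd. Viewing $\ell$ as a column vector and writing $A$ for the adjacency matrix of $C_m\square C_n$, the magic condition is $A\ell=\mu\mathbf 1$, with $\mathbf 1$ the all‑ones vector. The first step is to check that $A$ is invertible over $\zet_{mn}$, for which it suffices that $\det A$ is a unit modulo every prime $p\mid mn$; each such $p$ is odd. Over $\overline{\mathbb F}_p$ the eigenvalues of $A$ are the sums $\lambda+\lambda'$, where $\lambda$ is an eigenvalue of the adjacency matrix of $C_m$ and $\lambda'$ one of $C_n$; since the circulant matrix $A(C_k)=P_k+P_k^{-1}$ has eigenvalues $\zeta+\zeta^{-1}$ with $\zeta$ a root of unity whose order divides the largest divisor of $k$ coprime to $p$ (hence is odd, as $k$ is odd), each eigenvalue of $A$ has the form $(\zeta+\zeta^{-1})+(\xi+\xi^{-1})$ with $\zeta,\xi$ of odd order. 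Multiplying by the unit $\zeta\xi$, such a value vanishes iff $(\zeta+\xi)(\zeta\xi+1)=0$, i.e. $\zeta=-\xi$ or $\zeta\xi=-1$; either equality puts an element of order $2$ inside the cyclic group generated by $\zeta$ and $\xi$, which has odd order, a contradiction. Hence $A$ is invertible, so $\ell=\mu A^{-1}\mathbf 1$ is uniquely determined; since $A\mathbf 1=4\mathbf 1$ and $4$ is a unit modulo the odd number $mn$, this forces $\ell=4^{-1}\mu\,\mathbf 1$ to be constant, contradicting that $\ell$ is a bijection onto the $mn\ge 9$ elements of $\zet_{mn}$.

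\textbf{Sufficiency.} Now suppose $mn$ is even; since $C_m\square C_n\cong C_n\square C_m$ we may assume $n$ is even. I would look for a labeling of the shape $\ell(i,j)=b_j+(-1)^{j}\,ni$ with $b_j\in\zet_{mn}$ (indices read modulo $m$ and $n$). Evenness of $n$ makes $(-1)^{j}$ well defined on $\zet_n$, and since $mn\equiv 0$ the ``wrap‑around'' terms cause no trouble, so a direct computation gives, for every $i$,
\[
\ell(i-1,j)+\ell(i+1,j)+\ell(i,j-1)+\ell(i,j+1)=2b_j+b_{j-1}+b_{j+1}.
\]
Thus $\ell$ is a $\zet_{mn}$‑distance magic labeling with $\mu=2b_j+b_{j-1}+b_{j+1}$ as soon as this quantity is independent of $j$ and $\ell$ is a bijection; since the image of column $j$ is the coset $b_j+n\zet_{mn}$, bijectivity is equivalent to $\{\,b_j\bmod n : j\in\zet_n\,\}=\zet_n$. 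When $\gcd(m,n)=1$ one can take $b_j=(-1)^{j}mj$, i.e.\ $\ell(i,j)=(-1)^{j}(mj+ni)$: then $2b_j+b_{j-1}+b_{j+1}=0$ identically, and because $m$ is odd and invertible modulo $n$ the residues $(-1)^{j}mj\bmod n$ run through the even residues for $j$ even and the odd residues for $j$ odd, hence exhaust $\zet_n$. This settles the coprime case and in particular shows the statement is not vacuous.

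\textbf{The main obstacle.} The hard part is the case $d=\gcd(m,n)>1$, where the ``column $=$ coset'' ansatz above is no longer sufficient: the periodic solutions of the recurrence $2b_j+b_{j-1}+b_{j+1}=\mu$ over $\zet_{mn}$ are too rigid for $\{b_j\bmod n\}$ to run over all of $\zet_n$ — already for $C_3\square C_6$ one checks that no such $(b_j)$ exists. The remedy is to allow the columns to be sets other than cosets of the (unique) order‑$m$ subgroup of $\zet_{mn}$, and to construct the labeling by a case analysis on $d$ and on the $2$‑adic valuations of $m$ and $n$, assembling it from blocks in which the labels follow a sheared arithmetic pattern whose ``slopes'' are chosen among the units of $\zet_m$, so that the vertical parts of the weights telescope while the horizontal parts across consecutive columns stay balanced. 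For $m=n$ this can instead be obtained from a diagonal labeling of $C_n\square C_n$ (the subject of the next section, specialized to $\Gamma=\zet_{n^2}$); the remaining mixed cases need similar but more intricate explicit constructions, and simultaneously verifying bijectivity and constancy of the weight there is the technical heart of the argument.
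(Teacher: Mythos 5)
This theorem is quoted from \cite{Fro1}; the paper gives no proof of it (within the paper, sufficiency would follow from Theorem~\ref{thm:cart_cycle_Sylwia} with $\alpha=mn$ and necessity from Theorem~\ref{thm:odd-m,n}), so your attempt can only be judged on its own merits. Your necessity argument is correct and takes a genuinely different route from the usual diagonal-telescoping arguments: showing that the adjacency matrix $A$ of $C_m\square C_n$ is invertible over $\zet_{mn}$ when $mn$ is odd (via the eigenvalues $(\zeta+\zeta^{-1})+(\xi+\xi^{-1})$ with $\zeta,\xi$ of odd order in $\overline{\mathbb F}_p^\times$, and the factorization $\zeta\xi\lambda=(\zeta+\xi)(\zeta\xi+1)$), whence $\ell=4^{-1}\mu\mathbf 1$ must be constant. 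This is a clean, self-contained argument; the paper instead gets this direction for free from the stronger Theorem~\ref{thm:odd-m,n}, which rules out \emph{every} Abelian group when $m,n$ are both odd.

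The sufficiency direction, however, has a genuine gap. You give a complete, verifiable labeling only for $\gcd(m,n)=1$ (where $\ell(i,j)=(-1)^j(mj+ni)$ does work: the weight telescopes to $0$ and the parity argument for bijectivity is sound), and you defer $m=n$ to a diagonal construction. For the remaining cases --- $1<\gcd(m,n)$ with $m\neq n$ and $mn$ even, e.g.\ $C_4\square C_6$ or $C_6\square C_{10}$ --- you correctly diagnose that the column-coset ansatz $\ell(i,j)=b_j+(-1)^jni$ cannot succeed, but you replace it only with a description of what a construction ought to look like (``blocks,'' ``sheared arithmetic patterns,'' ``slopes among the units of $\zet_m$''), with no explicit labeling and no verification of bijectivity or of the magic condition. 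That is precisely where the content of the theorem lies. The standard repair, used both in \cite{Fro1} and in the proof of Theorem~\ref{thm:lcm/2} in this paper, is to label along the $\gcd(m,n)$ diagonals of length $\lcm(m,n)$ rather than along columns: on each diagonal consecutive labels are paired so that the sums $\ell(d^j_{2i})+\ell(d^j_{2i+1})$ and $\ell(d^j_{2i+1})+\ell(d^j_{2i+2})$ alternate between two fixed group elements, and the four neighbours of any vertex split into two such consecutive pairs on the two adjacent diagonals (or on the same diagonal when $\gcd(m,n)\le 2$), which forces a constant weight. Without carrying out such a construction, the ``if'' direction is not established.
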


The result was later improved by Cichacz~\cite{ref_CicAus}.
\begin{thm}\label{thm:cart_cycle_Sylwia}\emph{(\cite{ref_CicAus})}
Let $m$  or $n$ be even and $l =\lcm(m,n)$. Then $C_m \square C_n$ has a  $\zet_{\alpha}\times \gA$-magic labeling for any $\alpha \equiv 0 \pmod {l}$ and any Abelian group $\gA$ of order $mn/\alpha$.
\end{thm}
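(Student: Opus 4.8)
We sketch the construction we would carry out. By the symmetry of the two factors we may assume $n$ is even; identify $V(C_{m}\square C_{n})$ with $\zet_{m}\times\zet_{n}$, so that $(i,j)$ is adjacent to $(i\pm1,j)$ and $(i,j\pm1)$. Since $l=\lcm(m,n)\mid\alpha$ we have $m\mid\alpha$ and $n\mid\alpha$, so $\zet_{\alpha}$ contains cyclic subgroups $\langle s\rangle$ and $\langle t\rangle$ of orders $m$ and $n$ (one may take $s=\alpha/m$, $t=\alpha/n$); moreover $|\gA|=mn/\alpha$ divides $mn/l=\gcd(m,n)$, hence $|\gA|$ divides both $m$ and $n$. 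The plan is to produce a labeling in the factored form $\ell(i,j)=\bigl(f(i,j),\,g(i,j)\bigr)\in\zet_{\alpha}\times\gA$ and to arrange that each of the two coordinates of the weight $w(i,j)=\ell(i-1,j)+\ell(i+1,j)+\ell(i,j-1)+\ell(i,j+1)$ is independent of $(i,j)$; this clearly suffices.

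The device that makes the even factor do the work is the following elementary identity: as $n$ is even, $(-1)^{j}$ is well defined on $\zet_{n}$, and a summand of $\ell$ of the shape $(-1)^{j}h(i)$ contributes exactly $(-1)^{j}\bigl[h(i-1)+h(i+1)\bigr]$ to $w(i,j)$, because $(-1)^{j-1}+(-1)^{j+1}=0$; in particular a summand $(-1)^{j}c$ with $c$ a constant contributes $0$, so the two cycle-directions decouple. I would therefore take $f$ of the form $(-1)^{j}\phi(i)+(\text{a column-shift})$, where $\phi\colon\zet_{m}\to\langle s\rangle$ is a bijection chosen so that $\phi(i-1)+\phi(i+1)$ is constant in $i$ — this is the one genuinely rigid requirement on the $C_{m}$-side, and it is met by an explicit bijection of $\zet_{m}$ whose shape depends on $m$ (a zig-zag pattern when $m$ is even; an ad hoc choice when $m$ is odd, easiest for $m=3$ since then $\{i-1,i,i+1\}=\zet_{m}$) — while the column-shift realises the cosets of $\langle s\rangle$ inside $\zet_{\alpha}$ so that $f$ has the prescribed fibres. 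The $\gA$-coordinate $g$ is then added so that $(f,g)$ becomes a bijection onto $\zet_{\alpha}\times\gA$; since $|\gA|$ divides both $m$ and $n$ it can be kept ``distance-balanced'' (its contribution to $w$ collapsing to a constant, again via a $(-1)^{j}$-twist), although when $\gA$ has large rank this last step itself requires some care.

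The real obstacle is reconciling the two demands on the column-shift of $f$: bijectivity forces it to hit all the cosets of $\langle s\rangle$ in $\zet_{\alpha}$ with the correct multiplicities, whereas the magic condition constrains its second difference around $C_{n}$, and for $n\ge 6$ the obvious ``constant second difference'' shift is far too rigid to be onto. Overcoming this requires a more careful, case-sensitive assignment of the shifts — branching on $\gcd(m,n)$, on the $2$-adic valuations of $m$ and $n$, and on the invariant-factor type of $\gA$ — after which checking that $w(i,j)$ is constant is a direct computation using the two identities above. I expect this bijectivity bookkeeping, made uniform over all admissible pairs $(\alpha,\gA)$, to be the main technical difficulty; everything else reduces to the short identities noted in the second paragraph.
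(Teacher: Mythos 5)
First, a point of context: the paper does not actually prove this statement --- it is quoted from \cite{ref_CicAus} --- but it does prove the stronger Theorem~\ref{thm:lcm/2} by an explicit diagonal construction (zig-zag labeling of the main diagonal by a cyclic subgroup of order $l$, resp.\ $l/2$, followed by coset shifts between the $\gcd(m,n)$ diagonals), against which your row/column approach should be measured.

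There are two problems with your proposal. The first is a sign error in the identity on which everything rests: $(-1)^{j-1}+(-1)^{j+1}=-2(-1)^{j}\neq 0$, so a summand $(-1)^{j}h(i)$ of $\ell$ contributes $(-1)^{j}\bigl[h(i-1)-2h(i)+h(i+1)\bigr]$ to $w(i,j)$, the discrete second difference, not $(-1)^{j}\bigl[h(i-1)+h(i+1)\bigr]$. This changes your ``one genuinely rigid requirement'': demanding that $\phi(i-1)+\phi(i+1)$ be constant is not the right condition, and it is in fact unsatisfiable by a bijection when $m$ is odd (for $m=3$ that sum equals $\sum_{u}\phi(u)-\phi(i)$, which is constant only if $\phi$ is), so your ``ad hoc choice for $m=3$'' does not exist. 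With the corrected identity the requirement becomes ``second difference equal to a fixed involution or $0$,'' which a linear bijection $\phi(i)=dis$, $\gcd(d,m)=1$, satisfies for every $m$ --- so this part is repairable, but as written it is wrong.

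The second and decisive problem is that the construction is never carried out. You yourself identify ``the real obstacle'' --- choosing the column-shift so that $\ell$ is simultaneously a bijection onto $\zet_{\alpha}\times\gA$ and has constant weight --- and then defer it to an unspecified ``case-sensitive assignment'' branching on $\gcd(m,n)$, $2$-adic valuations, and the invariant factors of $\gA$. That unresolved step \emph{is} the theorem; everything you do prove (the decoupling identities, the divisibility facts $m,n\mid\alpha$ and $|\gA|\mid\gcd(m,n)$) is routine. By contrast, the diagonal construction of Theorem~\ref{thm:lcm/2} dissolves this difficulty structurally: each diagonal has length $l=\lcm(m,n)$ and is labeled by a coset of a cyclic subgroup of order $l$ (or $l/2$), so bijectivity is automatic from the coset decomposition, and the four neighbors of any vertex split into two consecutive pairs on adjacent diagonals whose sums are the two alternating constants of the zig-zag, making the weight computation immediate. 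To turn your sketch into a proof you would either have to supply the missing shift assignment in full generality, or switch to the diagonal parametrization.
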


The following related results were also proved in the respective papers.

\begin{thm}\label{thm:C_2^n_by_Z_2}\emph{(\cite{Fro1})} 
	The graph $C_{2^n}\square C_{2^n}$ has a $(\zet_2)^{2n}$-distance magic labeling for $n \geq 2$ and $\mu = (0,0,\ldots,0)$.
\end{thm}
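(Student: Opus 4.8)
The plan is to recast the statement over $\mathbb{F}_2$ and then assemble the labelling one coordinate at a time. Write the vertex set of $C_{2^n}\square C_{2^n}$ as $\zet_{2^n}\times\zet_{2^n}$, so the neighbours of $(i,j)$ are $(i\pm1,j)$ and $(i,j\pm1)$, and identify $\Gamma=(\zet_2)^{2n}$ with $\mathbb{F}_2^{2n}$. Since $\Gamma$ has characteristic $2$, a bijection $\ell=(\ell_1,\dots,\ell_{2n})\colon\zet_{2^n}^2\to\mathbb{F}_2^{2n}$ is $\Gamma$-distance magic with $\mu=\mathbf 0$ precisely when every coordinate function $\ell_k\colon\zet_{2^n}^2\to\mathbb{F}_2$ satisfies the ``discrete harmonic'' identity $\ell_k(i-1,j)+\ell_k(i+1,j)+\ell_k(i,j-1)+\ell_k(i,j+1)=0$ for all $(i,j)$, while the joint map $(i,j)\mapsto(\ell_1(i,j),\dots,\ell_{2n}(i,j))$ is onto. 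So it suffices to produce $2n$ harmonic-mod-$2$ functions whose joint evaluation is a bijection.

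For building blocks I would use functions of $i+j$ and of $i-j$: for any $f\colon\zet_{2^n}\to\mathbb{F}_2$ the maps $(i,j)\mapsto f(i+j)$ and $(i,j)\mapsto f(i-j)$ are harmonic mod $2$, because in each neighbour-sum the two relevant arguments occur twice and cancel. Take $\ell_1,\dots,\ell_n$ to be the $n$ binary digits of $(i+j)\bmod 2^n$ and $\ell_{n+1},\dots,\ell_{2n-1}$ to be the $n-1$ highest binary digits of $(i-j)\bmod 2^n$ (the lowest digit of $i-j$ equals that of $i+j$, hence is redundant). A short check shows the induced map $\zet_{2^n}^2\to\mathbb{F}_2^{2n-1}$ is onto and exactly $2$-to-$1$: agreement on $\ell_1,\dots,\ell_{2n-1}$ forces $i+j$ to be the same, and then (the parities of $i-j$ and $i+j$ coincide) also $i-j$ to be the same, hence $2i\equiv2i'$, so the fibre through $(i,j)$ is $\{(i,j),(i+2^{n-1},j+2^{n-1})\}$. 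Thus everything comes down to adjoining one further harmonic function $\ell_{2n}=h$ that is nonconstant on each fibre, i.e. with $h(i,j)+h(i+2^{n-1},j+2^{n-1})=1$ for all $(i,j)$; such an $h$ separates both points of every fibre, so $\ell$ becomes the desired bijection and $\mu=\mathbf 0$.

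To build $h$ I would pass to the group algebra $\mathbb{F}_2[\zet_{2^n}^2]\cong R:=\mathbb{F}_2[x,y]/(x^{2^n}-1,y^{2^n}-1)=\mathbb{F}_2[u,v]/(u^{2^n},v^{2^n})$, $u=x-1$, $v=y-1$. There the neighbour-sum operator is multiplication by a unit times $\theta:=u^2(1+v)+v^2(1+u)=(u+v)(u+v+uv)$, the shift by $(2^{n-1},2^{n-1})$ is multiplication by $1+\rho$ with $\rho=u^{2^{n-1}}+v^{2^{n-1}}+u^{2^{n-1}}v^{2^{n-1}}$ (Frobenius), and the all-ones function is the socle generator $u^{2^n-1}v^{2^n-1}$. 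Hence $h$ exists iff $u^{2^n-1}v^{2^n-1}\in\rho\cdot\operatorname{Ann}_R(\theta)$, that is iff $\rho\cdot\operatorname{Ann}_R(\theta)\ne0$; since $R$ is a zero-dimensional Gorenstein local ring (a complete intersection) with one-dimensional socle, every nonzero ideal contains the socle, and by the double-annihilator property $\rho\cdot\operatorname{Ann}_R(\theta)\ne0$ is equivalent to $\rho\notin\theta R$. So the proof reduces to verifying $\rho\notin\theta R$, which I would do by a direct monomial computation in $R$ (reducing $\rho$ modulo the generators $\theta u^av^b$ one is forced to use the unique generator contributing the monomial $u^{2^{n-1}}$ and is then left with an uncancellable monomial); the case $n=2$ can also be settled simply by writing $h$ down explicitly.

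The main obstacle is exactly this last point — producing the extra harmonic function $h$. The obvious candidates all fail: any function of $i\pm j$, and more generally any combination of the diagonal blocks, is constant on the fibres $\{(i,j),(i+2^{n-1},j+2^{n-1})\}$, whereas ``the top bit of $i$'' and similar genuinely two-dimensional functions are not harmonic. One therefore really needs a harmonic function outside the span of the diagonal blocks, and controlling it uniformly in $n$ — via the ring-theoretic argument above, an explicit formula extrapolated from small $n$, or an induction adding two coordinates along the $4$-fold cover $C_{2^n}\square C_{2^n}\to C_{2^{n-1}}\square C_{2^{n-1}}$ — is the crux of the argument.
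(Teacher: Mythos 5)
The paper itself gives no proof of this statement: it is quoted from \cite{Fro1}, and the nearest argument in the present paper is the proof of Theorem~\ref{thm:C_n x C_n}, which covers every Abelian group of order $n^2$ by an explicit diagonal-by-diagonal labeling built from cosets of an order-$n$ subgroup and a fixed-point-free bijection $\varphi$. Your route is genuinely different: you exploit the fact that for $\Gamma=(\zet_2)^{2n}$ the magic condition with $\mu=\mathbf 0$ splits into $2n$ independent $\mathbb{F}_2$-valued ``harmonic'' constraints, fill $2n-1$ coordinates with digit functions of $i+j$ and $i-j$, and reduce everything to the existence of one more harmonic $h$ that is antisymmetric under the shift by $(2^{n-1},2^{n-1})$. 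These reductions are all correct: the $2$-to-$1$ fibre computation, the identification of the neighbour-sum operator with a unit times $\theta=(u+v)(u+v+uv)$, the translation of the antisymmetry condition into $\rho H=u^{2^n-1}v^{2^n-1}$, and the Gorenstein double-annihilator equivalence with $\rho\notin\theta R$. What this buys is a clear explanation of why exactly one genuinely two-dimensional coordinate is needed; what it loses is generality, since it is specific to elementary Abelian $2$-groups, whereas Theorem~\ref{thm:C_n x C_n} handles all $\Gamma$ of order $n^2$ uniformly.

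The one genuine gap is the crux you yourself flag: $\rho\notin\theta R$ is only asserted via an unfinished ``monomial reduction'' sketch, and as stated that is not a proof (elements of $\theta R$ are arbitrary $R$-combinations of $\theta$, so a reduction argument needs a leading-term or ordering justification). The claim is true, though, and closes cleanly without any Gr\"obner-type bookkeeping. Put $s=u+v+uv$, $w=u+v$, $q=2^{n-1}$, and note $\rho=s^{q}$ by Frobenius. If $\rho=\theta g=ws\,g$, then $s\bigl(s^{q-1}+wg\bigr)=0$, so $s^{q-1}\in wR+\operatorname{Ann}_R(s)$; and $\operatorname{Ann}_R(s)$ consists exactly of the functions of $i-j$, each a unit multiple of $s^{2^n-1}$, so $\operatorname{Ann}_R(s)=s^{2^n-1}R$. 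Now pass to $R/(u+v)\cong\mathbb{F}_2[u]/(u^{2^n})$, where $s\mapsto u^2$: the ideal $wR+s^{2^n-1}R$ maps to $u^{2(2^n-1)}\,\mathbb{F}_2[u]/(u^{2^n})=0$, while $s^{q-1}\mapsto u^{2^n-2}\neq 0$ (this is where $n\geq 2$ enters, so that $q-1\geq 1$). The contradiction gives $\rho\notin\theta R$, and with that step supplied your proof is complete.
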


\begin{thm}\label{thm:odd-m,n}\emph{(\cite{ref_CicAus})}
	If $m,n$ are odd, then $C_m \square C_n$ is not $\Gamma$-distance magic graph for  any Abelian group $\Gamma$ of order $mn$.
\end{thm}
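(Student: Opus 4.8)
The plan is to reduce the statement to the invertibility of the adjacency matrix of $C_m\square C_n$ acting on $\Gamma$-valued functions. Index the vertices by $\zet_m\times\zet_n$ and let $M$ be the adjacency matrix of $C_m\square C_n$; then a bijection $\ell\colon V\to\Gamma$, regarded as a column vector over $\Gamma$, is a $\Gamma$-distance magic labeling with magic constant $\mu$ exactly when $M\ell=\mu\mathbf 1$, where $\mathbf 1$ is the all-ones vector. Since $C_m\square C_n$ is $4$-regular we have $M\mathbf 1=4\mathbf 1$, and since $mn$ is odd, multiplication by $4$ is an automorphism of $\Gamma$; hence there is a unique $x_0\in\Gamma$ with $4x_0=\mu$, and the constant vector $\ell_0:=x_0\mathbf 1$ is one solution of $M\ell=\mu\mathbf 1$. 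Consequently, if $M$ acts invertibly on $\Gamma$-valued functions, then $\ell=\ell_0$ is forced, so $\ell$ is constant --- impossible for a bijection onto $\Gamma$, since $mn\ge 9$. So the whole argument rests on the invertibility of $M$ over $\Gamma$.

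For this it suffices that $\det M$ be coprime to $|\Gamma|=mn$: if $e$ denotes the exponent of $\Gamma$ (so $e\mid mn$), then the identity $M\cdot\operatorname{adj}(M)=(\det M)I$, together with the invertibility of $\det M$ modulo $e$, shows that $M$ is injective, hence bijective, on $\Gamma$-valued functions --- no hypothesis on the structure of $\Gamma$ is needed. To evaluate $\det M$ I would use that the adjacency matrix of a Cartesian product has eigenvalues $\alpha_k+\beta_l$, where $\alpha_k=\omega_m^{k}+\omega_m^{-k}$ and $\beta_l=\omega_n^{l}+\omega_n^{-l}$ run over the eigenvalues of $C_m$ and of $C_n$ (with $\omega_m=e^{2\pi i/m}$, $\omega_n=e^{2\pi i/n}$). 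Fixing $l$ and putting $u=\omega_m^{k}$, the factorization $\beta_l+u+u^{-1}=u^{-1}(u+\omega_n^{l})(u+\omega_n^{-l})$, combined with $\prod_{k}\omega_m^{-k}=1$ and $\prod_{k}(w+\omega_m^{k})=w^{m}+1$ (both valid because $m$ is odd), collapses the inner product $\prod_k(\alpha_k+\beta_l)$ to $2+\omega_n^{lm}+\omega_n^{-lm}$. Carrying out the analogous simplification of $\prod_l\bigl(2+\omega_n^{lm}+\omega_n^{-lm}\bigr)$, after grouping the exponents $lm\bmod n$ according to $d=\gcd(m,n)$, yields
\[
\det M=4^{\gcd(m,n)},
\]
which is coprime to the odd number $mn$, as required.

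The main obstacle is this determinant computation, and within it the case $d=\gcd(m,n)>1$: there $\prod_l\bigl(2+\omega_n^{lm}+\omega_n^{-lm}\bigr)$ is not literally a product over all $n$-th roots of unity, so one must verify that each residue $lm\bmod n$ is attained with multiplicity exactly $d$, reducing the product to $\bigl(\prod_{r=0}^{n/d-1}(2+\omega_{n/d}^{r}+\omega_{n/d}^{-r})\bigr)^{d}=4^{d}$. A minor but necessary point is that the invertibility of $M$ over an arbitrary, possibly non-cyclic, group $\Gamma$ must be established through $\det M$ and $\operatorname{adj}(M)$ modulo the exponent of $\Gamma$ rather than by diagonalisation. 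As a byproduct, this argument also reproves --- and strengthens to every abelian group of order $mn$ --- the ``only if'' direction of Theorem~\ref{thm:cart_cycle_cyclic}.
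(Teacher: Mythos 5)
Your argument is correct. One point of order first: the paper does not prove this statement at all --- it is imported from \cite{ref_CicAus} --- so there is no in-paper proof to match; the closest relatives are the necessary-condition proofs of Theorems~\ref{thm:main-f} and~\ref{thm:gcd}, and your route is genuinely different from those. The paper's technique is local and combinatorial: it compares weights of consecutive vertices along a (backward) diagonal to obtain telescoping relations $\ell(x_{j,j})+\ell(x_{j+1,j+1})=a_j$, then sums them with alternating signs and invokes the exponent of $\Gamma$ to force two distinct vertices to receive the label $0$. Your proof is global and spectral: writing the magic condition as $M\ell=\mu\mathbf 1$, you compute $\det M=4^{\gcd(m,n)}$ via the eigenvalues $\alpha_k+\beta_l$ of the Cartesian product (I checked the factorization $\beta_l+u+u^{-1}=u^{-1}(u+\omega_n^{l})(u+\omega_n^{-l})$, the collapse to $\prod_l(2+\omega_n^{lm}+\omega_n^{-lm})$, the multiplicity-$d$ grouping, and the adjugate argument for non-cyclic $\Gamma$; all are sound, and the value $4^{\gcd(m,n)}$ is confirmed on $C_3\square C_3$), so for $mn$ odd the matrix $M$ is invertible on $\Gamma$-valued functions and the labeling is forced to be the constant solution, which cannot be a bijection. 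What each approach buys: yours is uniform over all Abelian groups in a single computation, and it isolates the real obstruction (4-regularity versus odd order) cleanly; the paper's telescoping method looks weaker here but is strictly more flexible in the even case, where $\det M=4^{\gcd(m,n)}$ is no longer coprime to $mn$, your argument says nothing about the (large) kernel of $M$, and the diagonal relations still extract the quantitative non-existence criteria $2r\min\{f(m),f(n)\}<\lcm(m,n)$ and $2r\gcd(m,n)<\lcm(m,n)$ of Theorems~\ref{thm:main-f} and~\ref{thm:gcd}.
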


The following general problem is still widely open.

\begin{prob}\label{prob:general}\emph{(\cite{Fro1})}  
	For a given graph $C_m \square C_n$, determine all Abelian groups $\Gamma$ such that the graph $C_m \square C_n$ admits a $\Gamma$-distance magic labeling.
\end{prob}


\indent Note that if a graph $G$ of order $n$ is distance magic, then it is $\zet_n$-distance magic. Moreover there are infinitely many  distance magic graphs that at the same time are group distance magic \cite{ref_AnhCicPetTep1}. Hence  Cichacz and Froncek  stated the following conjecture.
\begin{conj}[\cite{CicFro}]\label{conj:DM->GDM}
If $G$ is a distance magic graph, then $G$ is group distance magic.
\end{conj}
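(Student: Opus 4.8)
The plan is to read a $\Gamma$-distance magic labeling as a solution of a $\zet$-linear system and to exploit the structure theorem for finite Abelian groups. Write $A$ for the adjacency matrix of $G$ and identify a labeling with a vector $\ell\in\Gamma^{n}$; since the entries of $A$ are integers and $\Gamma$ is a $\zet$-module, the distance magic condition is exactly $A\ell=\mu\mathbf{1}$, where $\mathbf{1}$ is the all-ones vector and $\mu\in\Gamma$. A distance magic graph supplies a bijection $\ell_{0}\colon V\to\{0,1,\dots,n-1\}\subset\zet$ with $A\ell_{0}=\mu_{0}\mathbf{1}$ over $\zet$; reducing this identity modulo any $d\mid n$ shows that $\ell_{0}\bmod d$ is a function $V\to\zet_{d}$ of constant weight $\mu_{0}\bmod d$ that attains each residue exactly $n/d$ times. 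This already recovers the known fact that $G$ is $\zet_{n}$-distance magic, and it is the seed from which every other group labeling will be built.

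First I would reduce, coordinate by coordinate, to the primary components. By the structure theorem a $\Gamma$-distance magic labeling is the same data as functions $\ell_{i}\colon V\to\zet_{d_{i}}$ (where $\Gamma\cong\zet_{d_{1}}\times\cdots\times\zet_{d_{k}}$) with each $\ell_{i}$ of constant weight and the joint map $v\mapsto(\ell_{1}(v),\dots,\ell_{k}(v))$ a bijection onto $\Gamma$. Grouping the invariant factors by the prime they belong to and applying the Chinese Remainder Theorem to the pairwise coprime prime-power orders $p^{e_{p}}$, the map $v\mapsto(\ell_{0}(v)\bmod p^{e_{p}})_{p}$ is already a bijection $V\to\prod_{p}\zet_{p^{e_{p}}}$. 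Hence the cross-prime alignment is free, and it remains, separately for each prime $p$, to split the single $\zet_{p^{e_{p}}}$-valued coordinate $\ell_{0}\bmod p^{e_{p}}$ into the constant-weight coordinates demanded by the $p$-group $\Gamma_{p}$ while keeping them jointly bijective. In particular the cyclic factors need no work, and the whole conjecture reduces to the non-cyclic $p$-groups, the extreme case being $(\zet_{p})^{k}$.

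For $\Gamma_{p}=(\zet_{p})^{k}$ the tempting attempt is to read off base-$p$ digits: writing $\ell_{0}(v)=\sum_{j=0}^{k-1}c_{j}(v)\,p^{j}$ with $c_{j}(v)\in\zet_{p}$ and setting $\ell_{j}(v)=c_{j}(v)$, the joint map $v\mapsto(c_{0}(v),\dots,c_{k-1}(v))$ is a bijection onto $(\zet_{p})^{k}$ for free, so joint bijectivity is automatic. The constant-weight condition, however, need not survive: from $w(v)=\mu_{0}$ one only obtains $\sum_{j}p^{j}\bigl(\sum_{u\in N(v)}c_{j}(u)\bigr)=\mu_{0}$, and constancy of the total over $\zet$ does not force constancy of the individual digit-sums modulo $p$.

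This carry phenomenon is exactly where I expect the argument to stall, and it is, I believe, the reason the conjecture is still open: the base-$p$ decomposition hands over joint bijectivity but destroys the per-coordinate magic property, because integer addition couples the digits through carries. The two avenues I would pursue are (i) keep bijectivity and repair the weights, applying carry-controlling corrections to the digit functions $c_{j}$ via local exchanges of labels that preserve the bijection while equalising each $\sum_{u\in N(v)}c_{j}(u)$ in $\zet_{p}$; or (ii) abandon $\ell_{0}$ for the $p$-group factors and build the coordinate functions directly from the combinatorial data forced by distance magicness, for instance from the relations $w(v)=w(v')$, which after cancelling common neighbours constrain the labels on the symmetric differences $N(v)\triangle N(v')$, and then invoke an existence tool such as a flow or matching argument, or the Combinatorial Nullstellensatz, to produce a jointly bijective constant-weight splitting. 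Reducing the full conjecture to this single structural question about non-cyclic $p$-groups is, in my view, the realistic target; removing the carry obstruction for $(\zet_{p})^{k}$ is the hard core.
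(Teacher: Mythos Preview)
The statement you are attempting to prove is a \emph{conjecture}, not a theorem: the paper does not prove it and does not claim to. It is quoted from \cite{CicFro} as an open problem, and the paper's actual contribution in its vicinity goes the other way --- it exhibits $C_{2^m}\square C_{2^n}$ with $m\neq n$ as a graph that is $\zet_{mn}$-distance magic but not $(\zet_2)^{m+n}$-distance magic, thereby showing that the hypothesis in the conjecture cannot be weakened from ``distance magic'' to ``$\zet_n$-distance magic''. There is therefore no proof in the paper to compare your proposal against.

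As for the proposal itself: you are candid that it is a research outline rather than a proof, and you correctly isolate the obstruction. The reduction to $p$-primary components via the Chinese Remainder Theorem is sound, and you are right that the cyclic case is immediate from reducing the integer labeling modulo $n$. But the step where everything would have to happen --- producing, for a non-cyclic $p$-group such as $(\zet_p)^k$, coordinate functions that are simultaneously jointly bijective and individually of constant weight --- is exactly the step you leave open, and your two suggested avenues (local label exchanges to kill carries, or a Nullstellensatz/flow argument) are speculative. So the proposal is not a proof and does not pretend to be one; it is a reasonable framing of why the conjecture is hard, which is consistent with its status in the paper as open.
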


In the paper we  make some progress towards solution of Problem~\ref{prob:general} by proving some necessary conditions as well as some new existence results. In particular, we prove that for $n$ even  the graph $C_{n}\square C_{n}$ has a $\Gamma$-distance magic labeling for  any Abelian group $\Gamma$ of order $n^{2}$. Moreover  we show that if  $m\neq n$, then there does not exist a $(\zet_2)^{m+n}$-distance magic labeling of the Cartesian product   $C_{2^m} \square  C_{2^{n}}$. We prove a necessary and sufficient condition for $C_{m} \square  C_{n}$ with $\gcd(m,n)=1$  to be $\Gamma$-distance magic.  Observe that the Cartesian product   $C_{2^m} \square  C_{2^{n}}$ is $\zet_{2^{m+n}}$-distance magic by Theorem~\ref{thm:cart_cycle_Sylwia} but is not distance magic by Theorem~\ref{thm:cart_cycle_integers}. Therefore, this result is the first example that shows that assumptions in Conjecture~\ref{conj:DM->GDM} cannot be relaxed, that is, the statement that if a graph $G$ of order $n$ is  $\zet_n$-distance magic then it is group distance magic is not true.

\section{Sufficient conditions}\label{sec:sufficient}

Recall that the \emph{exponent} $\exp{(\Gamma)}$  of a group $\Gamma$ of order $q$ with elements $a_1,a_2,\dots,a_q$ is the smallest possible $r$ such that $ra_i=0$ for any $a_i\in \Gamma$. It is well known that in Abelian groups,  $r=\lcm(o_1,o_2,\dots,o_q)$ where $o_i$ is the order of $a_i$ for $i=1,2,\dots,q$.

It also well known that if $\Gamma$ has an even order, then there is an element $a_i$ of even order, and hence $\exp{(\Gamma)}=r$ must be even. Because the non-existence of $\Gamma$-labelings of $C_m\Box C_n$ for $|\Gamma|=mn$ odd follows from Theorem~\ref{thm:odd-m,n}, we will from now on only consider the case where  $|\Gamma|=mn$ is even. Consequently, we will always have $\exp{(\Gamma)}=r$ even.

We start with the following general theorem for Cartesian product of graphs:

\begin{thm}\label{thm:product}
Let $\Gamma_1$ and $\Gamma_2$ be Abelian groups with exponents $r_1$ and $r_2$, respectively.
Let $a_1$ and $a_2$ be some positive integers. If an $a_1r_{1}$-regular graph $G_{1}$ is $\Gamma _{1}$-distance magic and an $%
a_2r_{2}$ -regular graph $G_{2}$ is $\Gamma _{2}$-distance magic, then the
Cartesian product $G_{1}\Box G_{2}$ is $\Gamma _{1}\times \Gamma _{2}$%
-distance magic.
\end{thm}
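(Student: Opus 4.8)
The plan is to build a $\Gamma_1\times\Gamma_2$-distance magic labeling of $G_1\Box G_2$ directly out of the given labelings of the factors. Suppose $\ell_1\colon V(G_1)\to\Gamma_1$ is $\Gamma_1$-distance magic with magic constant $\mu_1$, and $\ell_2\colon V(G_2)\to\Gamma_2$ is $\Gamma_2$-distance magic with magic constant $\mu_2$. Define $\ell\colon V(G_1)\times V(G_2)\to\Gamma_1\times\Gamma_2$ by $\ell(g,h)=(\ell_1(g),\ell_2(h))$. This is a bijection because $\ell_1,\ell_2$ are bijections and $|V(G_1)||V(G_2)|=|\Gamma_1||\Gamma_2|$. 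The neighborhood of $(g,h)$ in $G_1\Box G_2$ splits as $\{(g',h):g'\in N_{G_1}(g)\}\cup\{(g,h'):h'\in N_{G_2}(h)\}$, so the weight is
\[
w(g,h)=\Bigl(\textstyle\sum_{g'\in N_{G_1}(g)}\ell_1(g')+\sum_{h'\in N_{G_2}(h)}\ell_1(g)\ ,\ \sum_{g'\in N_{G_1}(g)}\ell_2(h)+\sum_{h'\in N_{G_2}(h)}\ell_2(h')\Bigr).
\]
In the first coordinate, $\sum_{g'\in N_{G_1}(g)}\ell_1(g')=\mu_1$ and $\sum_{h'\in N_{G_2}(h)}\ell_1(g)=\deg_{G_2}(h)\,\ell_1(g)=a_2r_2\,\ell_1(g)$; similarly the second coordinate gives $a_1r_1\,\ell_2(h)+\mu_2$.

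The key observation — and the only place regularity is used — is that $r_i=\exp(\Gamma_i)$ kills every element of $\Gamma_i$, so $a_2r_2\,\ell_1(g)=a_2\,(r_2\ell_1(g))=a_2\cdot 0=0$ in $\Gamma_1$, and likewise $a_1r_1\,\ell_2(h)=0$ in $\Gamma_2$. Hence $w(g,h)=(\mu_1,\mu_2)$ for every vertex $(g,h)$, which is a constant element of $\Gamma_1\times\Gamma_2$. Therefore $\ell$ is a $\Gamma_1\times\Gamma_2$-distance magic labeling with magic constant $(\mu_1,\mu_2)$, and we are done.

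There is really no hard part here: the whole argument is the decomposition of the Cartesian-product neighborhood plus the fact that multiplication by a multiple of the exponent annihilates the group. The one point worth stating carefully is why the "cross terms" $\deg_{G_2}(h)\ell_1(g)$ vanish — this is exactly where the hypothesis that $\deg G_i$ is a multiple of $\exp(\Gamma_i)$ (not merely even, say) is needed, and it is what makes the construction independent of the particular vertex $(g,h)$. Everything else is bookkeeping: checking that $\ell$ is a bijection and that the orders match.
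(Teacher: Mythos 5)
Your construction and computation are exactly those of the paper: the product labeling $\ell(g,h)=(\ell_1(g),\ell_2(h))$, the split of $N_{G_1\Box G_2}((g,h))$ into the two layers, and the resulting weight $\bigl(\mu_1+a_2r_2\ell_1(g),\ \mu_2+a_1r_1\ell_2(h)\bigr)$. The one place where your write-up goes wrong is the justification of why the cross terms die. You write $a_2r_2\,\ell_1(g)=a_2\bigl(r_2\ell_1(g)\bigr)=0$ ``because $r_i=\exp(\Gamma_i)$ kills every element of $\Gamma_i$''; but $\ell_1(g)$ lives in $\Gamma_1$, and $r_2=\exp(\Gamma_2)$ annihilates $\Gamma_2$, not $\Gamma_1$. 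What is actually needed for the first coordinate is $\exp(\Gamma_1)\mid\deg(G_2)=a_2r_2$, and symmetrically $\exp(\Gamma_2)\mid\deg(G_1)=a_1r_1$ for the second coordinate; that is, the divisibility conditions have to be read ``crossed'' relative to the way the theorem is phrased. The hypotheses as literally stated ($r_1\mid\deg(G_1)$ and $r_2\mid\deg(G_2)$) do not by themselves imply these conditions, so your closing remark --- that the hypothesis ``$\deg G_i$ is a multiple of $\exp(\Gamma_i)$'' is exactly what makes the cross terms vanish --- is not correct as stated.

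To be fair, the paper's own proof writes the same weight formula and simply asserts that it equals $(\mu_1,\mu_2)$, without spelling out why $a_2r_2\ell_1(u)=0$ in $\Gamma_1$; so you have reproduced the paper's argument faithfully, and the gap is inherited rather than introduced. It is harmless in the paper's only application (hypercubes, where both exponents divide $4$ and both degrees are multiples of $4$, so each degree is a multiple of each exponent), but a clean self-contained version of the step should either assume $r_1\mid a_2r_2$ and $r_2\mid a_1r_1$ outright (e.g., take $G_1$ to be $a_1r_2$-regular and $G_2$ to be $a_2r_1$-regular), or explain why the crossed divisibility holds in the case at hand.
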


\begin{proof} 
	Let $\ell _{i}\colon V(G_{i})\rightarrow \Gamma_{i}$ be a $\Gamma _{i}$-distance magic labeling, and $\mu _{i}$ the magic constant for the graph $G_{i}$, $i\in \{1,2\}$. Define the labeling $ \ell :V(G_{1}\Box G_{2})\rightarrow \Gamma _{1}\times \Gamma _{2}$ for $G_{1}\Box G_{2}$, as:
\begin{equation*}
\ell ((x,y))=(\ell _{1}(x),\ell _{2}(y)).
\end{equation*}%
Obviously, $\ell $ is a bijection and moreover, for any $(u,w)\in
V(G_{1}\Box G_{2})$:
\begin{eqnarray*}
w(u,w) 	&=&\sum_{(x,y)\in N_{G_{1}\Box G_{2}}((u,w))}{\ell (x,y)}\\[6pt]
			&=&\left(\sum_{x\in N_{G_{1}}(u)}\ell _{1}(x) +a_2r_{2}\ell(u),\sum_{y\in N_{G_{2}}(w)}\ell_{2}(y)+a_1r_1\ell(w)\right) \\[8pt]
			&=&(\mu _{1},\mu _{2})=\mu ,
\end{eqnarray*}%
which settles the proof.\end{proof}

Theorem~\ref{thm:product} implies the following observation.
\begin{obs}
Let $d \equiv 0 \mod 4$. A hypercube $\cQ_d$ is $\Gamma$-distance magic for any Abelian $\Gamma$ of order $2^d$ with $\exp{(\Gamma)}\leq4$.
\end{obs}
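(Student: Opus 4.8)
The plan is to realize $\cQ_d$ as a Cartesian product of smaller hypercubes and apply Theorem~\ref{thm:product} together with the decomposition of $\Gamma$ into its factor of exponent dividing $4$. Write $d = 4k$. Since $d \equiv 0 \pmod 4$, we can factor $\cQ_d = \cQ_4 \square \cQ_4 \square \cdots \square \cQ_4$ ($k$ copies), using the well-known fact that $\cQ_{a+b} \cong \cQ_a \square \cQ_b$. The hypercube $\cQ_4$ is $4$-regular, and $\cQ_4 = \cQ_2 \square \cQ_2 = C_4 \square C_4$; by Theorem~\ref{thm:C_2^n_by_Z_2} with $n=2$ it is $(\zet_2)^4$-distance magic, and it is also $\zet_{16}$-distance magic by Theorem~\ref{thm:cart_cycle_cyclic} since $mn=16$ is even. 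For the remaining Abelian groups of order $16$ with exponent at most $4$, namely $\zet_4\times\zet_4$, $\zet_4 \times (\zet_2)^2$, one checks $\cQ_4 = C_4\square C_4$ is distance magic for these as well (this is where Theorem~\ref{thm:cart_cycle_Sylwia} applies, with $m=n=4$, $l=4$, taking $\alpha = 4$ and $\gA$ any Abelian group of order $4$, which covers $\zet_4\times\zet_4$, $\zet_4\times\zet_2\times\zet_2$, $\zet_4\times\zet_4$; the case $\gA=(\zet_2)^2$ together with $\alpha=4$ gives $\zet_4\times(\zet_2)^2$, and $\alpha=4$ with $\gA=\zet_4$ gives $\zet_4\times\zet_4$). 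So $\cQ_4$ is $\Delta$-distance magic for every Abelian group $\Delta$ of order $16$ with $\exp(\Delta)\le 4$.

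Now let $\Gamma$ be an arbitrary Abelian group of order $2^d = 16^k$ with $\exp(\Gamma) \le 4$. By the fundamental theorem of finite Abelian groups, $\Gamma$ decomposes as a direct sum of cyclic groups each of order $2$ or $4$ (order $1$ being trivial), so $\Gamma \cong \Delta_1 \times \Delta_2 \times \cdots \times \Delta_k$ where each $\Delta_j$ is an Abelian $2$-group of order $16$ with exponent at most $4$ — one simply partitions the cyclic factors of $\Gamma$ into $k$ blocks each of total order $16$, which is possible because $|\Gamma| = 16^k$ and each cyclic factor has order $2$ or $4$ (a greedy packing works, since $2$ and $4$ both divide $16$). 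Each $\Delta_j$ has exponent $r_j \in \{2,4\}$, hence $r_j \mid 4$, and the factor $\cQ_4$ is $4$-regular $= (4/r_j)\cdot r_j$-regular, so Theorem~\ref{thm:product} applies with $a_j = 4/r_j$ a positive integer. Iterating Theorem~\ref{thm:product} over the $k$ factors (formally by induction on $k$, noting that a Cartesian product of $\cQ_4$'s is regular of the appropriate degree at each stage), we conclude that $\cQ_d = \cQ_4^{\,\square k}$ is $\Delta_1 \times \cdots \times \Delta_k$-distance magic, i.e.\ $\Gamma$-distance magic.

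The main obstacle is the base case: confirming that $\cQ_4 = C_4 \square C_4$ is $\Delta$-distance magic for each of the (few) Abelian groups $\Delta$ of order $16$ with $\exp(\Delta) \le 4$. The groups $\zet_{16}$ and those of the form $\zet_4\times\gA$ or $(\zet_2)^4$ are covered by Theorems~\ref{thm:cart_cycle_cyclic}, \ref{thm:cart_cycle_Sylwia}, and \ref{thm:C_2^n_by_Z_2} as indicated; since every Abelian group of order $16$ and exponent $\le 4$ is either $(\zet_2)^4$ or contains a $\zet_4$ direct factor, this list is complete, so no ad hoc construction is actually needed. The rest of the argument is the routine bookkeeping of regularity degrees and the inductive application of Theorem~\ref{thm:product}.
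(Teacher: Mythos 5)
Your proposal is correct and follows essentially the same route as the paper: establish the base case $\cQ_4\cong C_4\square C_4$ for all Abelian groups of order $16$ with exponent at most $4$ via Theorems~\ref{thm:cart_cycle_Sylwia} and~\ref{thm:C_2^n_by_Z_2}, then iterate Theorem~\ref{thm:product} over the decomposition $\cQ_d=\cQ_{d-4}\square\cQ_4$ after splitting the $\zet_2$/$\zet_4$ factors of $\Gamma$ into blocks of order $16$. The only cosmetic difference is that you phrase the induction as a $k$-fold product and make the block partition of the group factors explicit (the mention of $\zet_{16}$ is superfluous since its exponent exceeds $4$, but harmless).
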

\begin{proof} 
	Note that in the factorization of $\Gamma$ we have only factors $\zet_2$ and $\zet_4$ since $\exp{(\Gamma)}\leq4$. The proof is by induction on $d$. Because $\cQ_4\cong C_4 \Box C_4$ we obtain by Theorems~\ref{thm:cart_cycle_Sylwia} and \ref{thm:C_2^n_by_Z_2} that $\cQ_4$ is $\Gamma$-distance magic for any Abelian $\Gamma$ of order $16$ with $\exp{(\Gamma)}\leq4$. Recall that for $d\geq8$ the  hypercube $\cQ_d$ can be also defined recursively in terms of the Cartesian product of two graphs as   $\cQ_d=\cQ_{d-4}\Box \cQ_4$. Obviously $\cQ_d$ is $d$-regular.
Therefore we are done by Theorem~\ref{thm:product}.
\end{proof}

Let $V(C_m \square C_n)=\{x_{i,j}:0\leq i\leq m-1, 0\leq j\leq n-1\}$, where $N(x_{i,j})=\{x_{i,j-1},x_{i,j+1},x_{i+1,j},x_{i-1,j}\}$ and the operations on the first and  second suffix are performed modulo $m$ and $n$, respectively. Without loss of generality we can assume $m\leq n$. By a diagonal $D^j$ of $C_m \square C_n$ we mean a sequence of vertices $(x_{0,j},x_{1,j+1},\ldots,$ $x_{m-1,j+m-1},x_{0,j+m}, x_{1,j+m+1},$ $\ldots,x_{m-1,j-1})$ of length $l$. It is easy to observe that $l = \lcm(m,n)$, the least common multiple of $m$ and $n$. We denote the $j$-th diagonal by $D^j = (d^j_0,d^j_1,\ldots,d^j_{l-1})$ and call $D^0$ the \textit{main diagonal}.

Now we slightly strengthen Theorem~\ref{thm:cart_cycle_Sylwia}.

\begin{thm}\label{thm:lcm/2}
Let $mn$ be even and $l =\lcm(m,n)$. Then $C_m \square C_n$ has a  $\zet_{\alpha}\times \gA$-magic labeling for any $\alpha \equiv 0 \pmod {l/2}$ and any Abelian group $\gA$ of order $mn/\alpha$.
\end{thm}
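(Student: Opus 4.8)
The plan is to construct the labeling explicitly along the diagonals, refining the construction behind Theorem~\ref{thm:cart_cycle_Sylwia} so that the ``winding'' uses a cyclic subgroup of order $l/2$ rather than $l$, with the lost factor of two absorbed by an involution of $\gA$. First I would dispose of two cases. Since $mn$ is even one of $m,n$ is even, hence $l=\lcm(m,n)$ is even and $l/2$ is a positive integer, so the statement is meaningful. If $l\mid\alpha$ the claim is Theorem~\ref{thm:cart_cycle_Sylwia}; so assume $l\nmid\alpha$. Comparing valuations, for every odd prime $p$ we have $v_p(l)=v_p(l/2)\le v_p(\alpha)$ because $l/2\mid\alpha$, so if $\alpha$ also absorbed the full power of $2$ dividing $mn$ we would get $v_2(l)\le v_2(mn)=v_2(\alpha)$ and hence $l\mid\alpha$, a contradiction; therefore $|\gA|=mn/\alpha$ is even and $\gA$ contains an involution $\iota$.

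Now recall the diagonals $D^0,\dots,D^{d-1}$, $d=\gcd(m,n)$, and that every neighbour of $d^j_k$ lies on $D^{j-1}$ or $D^{j+1}$: $N(d^j_k)=\{d^{j-1}_k,d^{j-1}_{k+1},d^{j+1}_{k-1},d^{j+1}_k\}$. Put $g=2\alpha/l\in\zet_\alpha$, which has order exactly $l/2$ (here $l/2\mid\alpha$ is used), and take $\rho\colon\zet_l\to\zet_{l/2}$, $\rho(2i)=i$, $\rho(2i+1)=-i$, together with $\varepsilon\colon\zet_l\to\zet_2$, $\varepsilon(k)=k\bmod 2$. Then $(\rho,\varepsilon)$ is a bijection onto $\zet_{l/2}\times\zet_2$ and, what drives everything, $\rho(k)+\rho(k+1)=k\bmod 2$, so $2\rho(k)+\rho(k+1)+\rho(k-1)\equiv 1$ in $\zet_{l/2}$ for every $k$. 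I would label $d^j_k=\bigl(p_j+\rho(k)g,\ q_j+\varepsilon(k)\iota\bigr)$ with base points $(p_j,q_j)$ yet to be chosen. Then the $\zet_\alpha$-coordinate of $w(d^j_k)$ collapses to $2p_{j-1}+2p_{j+1}+g$, independent of $k$, and the $\gA$-coordinate to $2q_{j-1}+2q_{j+1}+\bigl(2\varepsilon(k)+\varepsilon(k+1)+\varepsilon(k-1)\bigr)\iota=2q_{j-1}+2q_{j+1}$, independent of $k$ as well, the last term dying because the bracket equals $2$ and $2\iota=0$; thus the labeling is automatically magic \emph{within} each diagonal. A bookkeeping point: the diagonals close up into one cyclic family only after a shift (passing from $D^{d-1}$ back to $D^0$ re-indexes positions by the constant $\tau$ with $\tau\equiv 0\pmod m$, $\tau\equiv d\pmod n$), which I would absorb by composing $\rho,\varepsilon$ with $j$-dependent offsets accumulating $\tau/d$; since the consecutive-sum function of $\rho$ is merely $k\mapsto (k\bmod 2)$ this re-indexing does not spoil constancy of the weight.

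It remains to choose the base points, and this is the hard part. Because $(\rho,\varepsilon)$ is a bijection, $D^j$ receives precisely the coset $(p_j,q_j)+\bigl(\langle g\rangle\times\{0,\iota\}\bigr)$, a set of size $l$; there are exactly $\alpha|\gA|/l=mn/l=d$ such cosets, so $\ell$ is a bijection if and only if $\{(p_j,q_j):j\in\zet_d\}$ is a transversal of $\langle g\rangle\times\{0,\iota\}$ in $\zet_\alpha\times\gA$. For the weight to be constant over $j$ as well, one needs $2p_{j-1}+2p_{j+1}$ and $2q_{j-1}+2q_{j+1}$ independent of $j$, which after solving the recurrence is a $4$-periodicity-type condition on the sequences $2p_j$ and $2q_j$ (and is \emph{equality} of all terms when $d$ is odd). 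Reconciling this with the transversal requirement is the main obstacle: once the map from diagonals to $\langle g\rangle\times\{0,\iota\}$-cosets is fixed, $2q_j$ is determined (as $2(q_j+\iota)=2q_j$) and $2p_j$ is pinned modulo $\langle 2g\rangle$, so the room to make the two sums agree is limited, and whether it suffices splits according to $d\bmod 4$, to the order of $g$ (i.e. to $l\bmod 4$), and to the $2$-primary structure of $\gA$; in the tight cases — notably when $\gcd(m,n)$ is odd and $4\mid l$, where the above forces all the $2p_j$, and all the $2q_j$, to coincide — one must replace the ``one coset per diagonal'' pattern by a finer partition (letting a diagonal draw positions from more than one coset, with a correspondingly adjusted winding) or argue by hand. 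For the bulk of the argument I would transplant the organization of these choices from the proof of Theorem~\ref{thm:cart_cycle_Sylwia} (its $\langle g\rangle$-of-order-$l$, no-$\iota$ analogue) and then read off $\mu=\bigl(2p_{j-1}+2p_{j+1}+g,\ 2q_{j-1}+2q_{j+1}\bigr)$.
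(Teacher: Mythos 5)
Your setup (reduce to $\alpha\equiv l/2\pmod l$, wind each diagonal with a cyclic subgroup, control weights via the consecutive-sum identity) is sound, and your observation that $|\gA|$ must then be even is correct. But the proof has a genuine gap exactly where you flag ``the hard part'': the choice of base points is never made, and your rigid ansatz $\ell(d^j_k)=(p_j+\rho(k)g,\,q_j+\varepsilon(k)\iota)$, with one coset of the order-$l$ subgroup $K=\langle g\rangle\times\{0,\iota\}$ per diagonal, provably cannot be completed whenever $d=\gcd(m,n)$ is odd and at least $3$. Indeed, the weight condition forces $2p_{j+1}+2p_{j-1}$ (and $2q_{j+1}+2q_{j-1}$) to be constant in $j$, hence the sequences $2p_j,2q_j$ are $4$-periodic on $\zet_d$; for odd $d$ this makes them constant, so all $d$ base points lie in a single coset of $K$ (the quotient $\Gamma/K$ has odd order $d$, so doubling is injective there), contradicting the transversal requirement. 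Concrete instances such as $C_3\square C_6$ with $\alpha=3$ fall into this case, so ``transplanting the organization from Theorem~\ref{thm:cart_cycle_Sylwia}'' cannot close the gap --- a different mechanism is needed, not just different bookkeeping. The unaddressed wrap-around when $d\le 2$ (neighbours of $d^j_k$ lying on $D^j$ itself with shifted indices) is a second, smaller omission.

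The paper's proof avoids the cross-diagonal recurrence altogether. It takes $H\le\zet_\alpha\times\{0\}$ of order $l/2$, so $\Gamma/H$ has even order $2d$, picks $b_1\in\Gamma/H$ with $b_1\neq 2b$ for all $b$ (possible precisely because $|\Gamma/H|$ is even), and partitions $\Gamma/H$ into $d$ fixed-point-free pairs $(h_i,-h_i+b_1)$. Each diagonal receives one pair of cosets, interleaved so that \emph{every} diagonal has the same alternating consecutive sums $b_1$ and $b_1+h$; the weight of any vertex is then one sum of each kind, i.e.\ $h+2b_1$, uniformly, with no condition linking different diagonals. That is the idea your proposal is missing: rather than making each diagonal an affine image of a fixed pattern and then trying to reconcile the offsets, one arranges for the consecutive sums themselves to be universal constants.
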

\begin{proof} 
Notice that $l=2k$ for some $k$ and $\alpha=kh$ for some $h$. Notice that $d=\gcd(m,n)$ is the number of diagonals of
$C_m \square C_n$.

%
%

For $\alpha\equiv0\pmod{l}$ the claim follows from Theorem~\ref{thm:cart_cycle_Sylwia}. Hence, we can can only look at the case when $\alpha \equiv l/2 \pmod l$.

Let $r=mn/\alpha$ and $\Gamma\cong\zet_{\alpha}\times \gA$, thus if $g \in \Gamma$, then we can write that $g=(j,a_i)$ for $j \in \zet_{\alpha}$ and $a_i \in \gA$ for $i=0,1,\ldots,r-1$. We can assume that $a_0$ is the identity in $A$. Let $\ell(x)=(l_1(x),l_2(x))$ where $l_1(x)\in\zet_{\alpha}$ and $l_2(x)\in\gA$.

There exists a subgroup $\langle h' \rangle$  of $\zet_{\alpha}$ of order $k=l/2$, therefore the element $h=(h',a_0)$ generates a subgroup $H$ in $\Gamma$ of order $k$. Let $b_0,b_1,\ldots,b_{2d-1}$ be the set of coset representatives for $\Gamma/H$. Notice that in any cyclic  group $\zet_{2j}$, $j\geq 1$ there exists an element $g\neq0$ such that there is no $a\in \zet_{2j}$ satisfying $2a=g$ (for instance take $g=1\in \zet_{2j}$). Thus by Fundamental Theorem of Abstract Algebra, because $|\Gamma/H|$ is even, we can assume without loss of generality that $b_1\in \Gamma/H$ is such that $b_1\neq 2b$ for any $b\in \Gamma/H$. Moreover we can partition $\Gamma/H$ in to $d$ pairs $(h_i,h_i')$,  where $h_i+h_i'=b_1$ and $h_i\neq h_i'$ for $i=0,1,\ldots,d-1$.

Label the vertices of $D^0$ as follows:
$$\ell(d^0_{2i})=ih+h_0,\; \ell(d^0_{2i+1})=-ih-h_0+b_1$$
for $i=0,1,\ldots,k-1$.

The vertices in $D^1,D^2,D^3,\dots,D^{d-1}$ will be labeled as
\begin{eqnarray*}
\ell(d^{j}_{g})&=l_1(d^{j-1}_g)+h_{j+1} \ \text{if} \ g\equiv  1 \imod 2,\\
\ell(d^{j}_{g})&=l_1(d^{j-1}_g)-h_{j+1} \ \text{if} \ g\equiv  0 \imod 2.
\end{eqnarray*}

Observe that the labeling $\ell$ is a bijection because $h_j\neq -h_i+b_1$ for any $i\neq j$. Moreover,
\begin{eqnarray*}
	\ell(d^{j}_{2i})+\ell(d^{j}_{2i+1}) 	&= &b_1 \hskip35pt \text{and}\\
	\ell(d^{j}_{2i+1})+\ell(d^{j}_{2i+2}) &= & h+b_1
\end{eqnarray*}
for any $i$.

If $d>2$, then the vertex $x_{i',j'}=d^j_i$ has in $C_m \square C_n$ neighbors $d^{j-1}_{i},d^{j-1}_{i+1},d^{j+1}_{i-1}$ and $d^{j+1}_{i}$. Therefore $w(d^j_i)=h+2b_1$ and the labeling is $\Gamma$-distance magic as desired.

If $d\leq2$, then the vertex $x_{i',j'}$ has in $C_m \square C_n$ neighbors $d^{j}_{a},d^{j}_{a+1},d^{j}_{b-1}$ and $d^{j}_{b} $ for $0\leq j\leq 1$, $0\leq a<b\leq l-1$. We know that at least one of $m,n$ is even, so
we can assume that $m = 2s$. Because $d^{j}_{a}=x_{i',j'-1}$  and $d^{j}_{b}= x_{i',j'+1}$, it is clear that
$a = b + qm$ for same $1\leq q  < l/m$. But $m = 2s$ and $a = b + 2qs$ and hence $a$ and $b$
have the same parity. When $a$ and $b$ are even, say $a = 2c$ and $b = 2f$, then
$$d^{j}_{a}+d^{j}_{a+1}=d^{j}_{2s}+d^{j}_{2s+1}=b_1$$
and
$$d^{j}_{b-1}+d^{j}_{b}=d^{j}_{2f-1}+d^{j}_{2f}=b_1+h,$$
which implies $w(x_{i',j'})=h+2b_1$.\end{proof}

Now we present a class of group distance magic cycle products, that is, cycle products that are $\Gamma$-distance magic for any Abelian group $\Gamma$ of an appropriate order.

\begin{thm}\label{thm:C_n x C_n}
Let $n$ be even. Then $C_{n} \square C_{n}$ has a  $\Gamma$-distance magic labeling for any  Abelian group $\Gamma$ of order $n^2$.
\end{thm}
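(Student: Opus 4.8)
The plan is to reduce the problem to the structure theorem for finite Abelian groups and then apply the sufficient conditions already established, patching the cases not covered by them. Write $\Gamma$ as a product of cyclic groups of prime-power order. Since $|\Gamma| = n^2$ with $n$ even, at least one factor has even order, so $\exp(\Gamma) = r$ is even. The natural first move is to separate the $2$-part from the odd part: write $\Gamma \cong \Gamma_2 \times \Gamma_{\mathrm{odd}}$, where $|\Gamma_2| = 2^{2a}$ is the Sylow $2$-subgroup and $|\Gamma_{\mathrm{odd}}|$ is odd. The odd part is a square, say $|\Gamma_{\mathrm{odd}}| = t^2$, and I would hope to realize it as coming from a $\Gamma_{\mathrm{odd}}$-distance magic labeling of $C_t \square C_t$ (or of $C_{2^a t}$, a cycle whose length is a suitable multiple), so that Theorem~\ref{thm:product} glues it to a labeling of the $2$-part living on a cycle product of the complementary size.

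The cleanest route is probably to invoke Theorem~\ref{thm:lcm/2} and Theorem~\ref{thm:product} together as follows. For a factorization $\Gamma \cong \zet_\alpha \times \gA$ with $\alpha$ the largest invariant factor (or any $\alpha$ divisible by a convenient value), $C_n \square C_n$ with $l = \lcm(n,n) = n$ is $\zet_\alpha \times \gA$-distance magic whenever $\alpha \equiv 0 \pmod{n/2}$. So if $\exp(\Gamma)$ is divisible by $n/2$ we are immediately done. The remaining case is $\exp(\Gamma)$ small relative to $n$ — in the extreme, $\Gamma = (\zet_2)^{2\log_2 n}$ when $n$ is a power of $2$, or more generally $\Gamma$ an elementary-abelian-ish group of bounded exponent. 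Here I would split $C_n \square C_n$ itself as a Cartesian product of smaller cycle products (using $C_{n} \cong$ nothing, but rather factoring through $C_n \square C_n = (C_{n_1} \square C_{n_1}) \square (\text{something})$ only when $n = n_1 n_2$) — actually the right decomposition is to match the group factorization to cycle lengths: if $\Gamma \cong \Gamma' \times \Gamma''$ with $|\Gamma'| = {n'}^2$, $|\Gamma''| = {n''}^2$, $n = n' n''$, and both $C_{n'} \square C_{n'}$ and $C_{n''} \square C_{n''}$ handled inductively (with appropriate regularity matching $a_i r_i = 4$), then $C_n \square C_n$ is not literally their product, so this needs care.

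Given that subtlety, the honest plan is: first dispose of the case $4 \mid \exp(\Gamma)$ or more generally $n/2 \mid \exp(\Gamma)$ via Theorem~\ref{thm:lcm/2}; then handle $\exp(\Gamma) = 2$, i.e. $\Gamma = (\zet_2)^{2a}$ with $n = 2^a$, for which Theorem~\ref{thm:C_2^n_by_Z_2} gives exactly $C_{2^a}\square C_{2^a}$ is $(\zet_2)^{2a}$-distance magic; and then treat intermediate exponents $\exp(\Gamma) = 2^b$ with $2 < 2^b < n/2$, where I expect to build the labeling diagonal-by-diagonal in the spirit of the proof of Theorem~\ref{thm:lcm/2}, but now assigning to each diagonal an element of $\Gamma$ chosen so that consecutive-sum and alternating-sum conditions hold; the flexibility of a group of order $n^2$ on a diagonal of length $n$ should make room for this. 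The main obstacle will be precisely this intermediate range: unlike the cyclic-plus-complement setup of Theorem~\ref{thm:lcm/2}, here there is no large cyclic factor to carry the $l/2$-periodic pattern along a diagonal, so one must verify by hand that the $n$ labels placed on a single diagonal can be chosen distinct, partition-compatible across the $n$ diagonals, and satisfy the two telescoping sum identities simultaneously — essentially a combinatorial design argument inside $\Gamma$ that generalizes both the $h_i + h_i' = b_1$ pairing trick and the elementary-abelian argument of Theorem~\ref{thm:C_2^n_by_Z_2}. I would isolate that as the one technical lemma and prove everything else by citing Theorems~\ref{thm:product}, \ref{thm:lcm/2}, \ref{thm:C_2^n_by_Z_2}.
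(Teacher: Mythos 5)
Your reduction is sound as far as it goes: the case $n/2 \mid \exp(\Gamma)$ (take $\alpha$ to be the largest invariant factor) is indeed covered by Theorem~\ref{thm:lcm/2}, the case $\Gamma\cong(\zet_2)^{2a}$ by Theorem~\ref{thm:C_2^n_by_Z_2}, and you are right that $C_n\square C_n$ does not factor as a Cartesian product of smaller cycle products, so Theorem~\ref{thm:product} cannot carry the remaining cases. But that remaining range --- e.g.\ $\Gamma\cong(\zet_4)^4$ for $n=16$, or $\Gamma\cong(\zet_2)^6\times(\zet_3)^2$ for $n=24$, where the largest invariant factor is a proper divisor of $n/2$ and the group is not elementary abelian --- is precisely where the content of the theorem lies, and your proposal stops at announcing ``one technical lemma'' without constructing the labeling. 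A design argument that makes $n$ distinct labels per diagonal satisfy the two telescoping identities simultaneously is exactly what has to be exhibited, and nothing in the cited theorems supplies it; so as written the proof is incomplete at its essential step.

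For comparison, the paper does not case-split on $\exp(\Gamma)$ at all. It runs one uniform construction: choose any subgroup $H<\Gamma$ with $|H|=n$ (which always exists since $|\Gamma|=n^2$), pick an involution $\iota\in H$ and use it to define a fixed-point-free pairing $\varphi$ of $H$ (namely $\varphi(x)=x+\iota$ when $n\equiv0\pmod4$ and $\varphi(x)=-x+\iota$ when $n\equiv2\pmod4$), and arrange coset representatives $b_0,\dots,b_{n-1}$ of $\Gamma/H$ so that the involutions of $\Gamma/H$ come first and the remaining representatives are paired with their inverses. The main diagonal is labeled by $H$ via the pairing, and each subsequent diagonal is obtained from an earlier one by adding $\pm(b_{r}-b_{r-1})$ (or $\pm(b_r-b_{r-4})$) with signs alternating by parity of the position; the weight of every vertex then telescopes to $0$, with the wrap-around at $r=n-1$ handled by the choice of $\varphi$. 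You might revisit your sketch with this in mind: the ``large cyclic factor'' you found missing is replaced by an arbitrary index-$n$ subgroup, and the periodic pattern along a diagonal is replaced by cancellation between consecutive diagonals rather than within one.
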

\begin{proof} 
The Fundamental Theorem of Finite Abelian Groups states that a finite Abelian
group $\Gamma$ of order $m=n^2$ can be expressed as the direct product of cyclic subgroups of prime-power order. This implies that
$$
	\Gamma\cong\zet_{p_1^{\alpha_1}}\times\zet_{p_2^{\alpha_2}}\times\ldots\times\zet_{p_k^{\alpha_k}}\;\;\; \mathrm{where}\;\;\; n = p_1^{\alpha_1}\cdot p_2^{\alpha_2}\cdot\ldots\cdot p_k^{\alpha_k}
$$
and $p_i$ for $i \in \{1, 2,\ldots,k\}$ are primes, not necessarily distinct. This product is unique up to the order of the direct product.
Therefore there exists $H<\Gamma$ such that  $|H|=n$.  Let $b_0,b_1,\ldots,b_{n-1}$ be coset representatives of $\Gamma/H$. Recall that in any Abelian group of even order the number of involutions is odd, therefore 
 $\Gamma/H$ has $2t-1$ involutions $\iota_1,\iota_2,\ldots,\iota_{2t-1}$ for $t\geq 1$.


 Let $b_0$ be the identity element of $\Gamma/H$ and $b_i=\iota_i$ for $i\in\{1,2,\ldots,2t-1\}$, and $b_{i+1} = -b_i$ for $i \in\{ 2t,2t+2,2t+4,\ldots,n-2\}$.

Observe that because  $|H|$ is even there exists an involution  $\iota\in H$ ($\iota\neq 0$ and $2\iota=0$). We will define a bijection  $\varphi\colon H \rightarrow H$ such that $\varphi(x)\neq x$ for any $x\in H$.  For $n\equiv 0 \pmod 4$ let   $\varphi(x)=x+\iota$. When $n\equiv 2 \pmod 4$, then let $\varphi(x)=-x+\iota$.  Notice that for $n\equiv 2 \pmod 4$   there does not exist $x\in H$ such that $2x=\iota$.

Hence we   can partition $H$ in to $n/2$ pairs $(h_i,h_i')$  such that $h_i'=\varphi(h_i)$ and $h_i\neq h_i'$ for $i=0,1,\ldots,n/2-1$.

Now, for $i=0,1,\dots,n-1$, label the vertices of $D^0$ as
$$\ell(d^0_{i})=h_i,\;\;\;\ell(d^0_{n/2+i})=\varphi(h_i)
$$
and the vertices of $D^2$ as
\begin{eqnarray*}
	\ell(d^2_{i})	&=-\ell(d^0_{i+1})+b_2 & \text{if} \ i\equiv  0 \imod 2 \hskip10pt \text{and}\\
	\ell(d^2_{i})	&=-\ell(d^0_{i+1})-b_2 &\text{if} \ i\equiv  1 \imod 2.
\end{eqnarray*}
For $r\in\{1,3\}$ label  the vertices of $D^r$ as
\begin{eqnarray*}
	\ell(d^r_{i})	&=\ell(d^{r-1}_{i})-b_{r-1}+b_r & \text{if} \ i\equiv  0 \imod 2 \hskip10pt \text{and}\\
	\ell(d^r_{i})	&=\ell(d^{r-1}_{i})+b_{r-1}-b_r &\text{if} \ i\equiv  1 \imod 2.
\end{eqnarray*}
%
%
%
For $r\in\{4,5,\ldots,n-1\}$ the vertices in $D^r$ will be labeled as
\begin{eqnarray*}
	\ell(d^{r}_{i})	&=\ell(d^{r-4}_{i+2})-b_{r-4}+b_{r} & \text{if} \  i\equiv  0 \imod 2\hskip10pt \text{and}\\
	\ell(d^{r}_{i})	&=\ell(d^{r-4}_{i+2})+b_{r-4}-b_{r} & \text{if} \  i\equiv  1 \imod 2.
\end{eqnarray*}
Observe that
$$w(d^{r}_{i})= \ell(d^{r-1}_{i})+\ell(d^{r-1}_{i+1})+\ell(d^{r+1}_{i-1})+\ell(d^{r+1}_{i}).$$

Assume first that $r\not \in \{n-1,0\}$. When $r\equiv 1,2\pmod 4$, then
$$
	w(d^{r}_{i})= \ell(d^{r-1}_{i})+\ell(d^{r-1}_{i+1})-\ell(d^{r-1}_{i})-\ell(d^{r-1}_{i+1})+2\iota=0.
$$
If
$r\equiv 0,3\pmod 4$,
then
$$
	w(d^{r}_{i})= -\ell(d^{r-3}_{i+1})-\ell(d^{r-3}_{i+2})+2\iota+\ell(d^{r-3}_{i+1})+\ell(d^{r-3}_{i+2})=0.
$$

Assume now that $r=n-1$ and $n\equiv 0 \pmod 4$. Then
\begin{eqnarray*}
	w(d^{n-1}_{i})
	&=&\ell(d^{n-2}_{i})+\ell(d^{n-2}_{i+1})+\ell(d^{0}_{i-1})+\ell(d^{0}_{i})\\
	&=&\ell(d^{2}_{i+2(n/4-1)})+\ell(d^{2}_{i+1+2(n/4-1)})+\ell(d^{0}_{i-1})+\ell(d^{0}_{i})\\
	&=&\ell(d^2_{i-2+n/2})+\ell(d^2_{i-1+n/2})+\ell(d^{0}_{i-1})+\ell(d^{0}_{i})\\
	&=&-\ell(d^0_{i-1+n/2})-\ell(d^0_{i+n/2})+\ell(d^{0}_{i-1})+\ell(d^{0}_{i})\\
	&=&-h_{i-1}-h_{i}+h_{i-1}+h_i=0.
\end{eqnarray*}
If $r=n-1$ and $n\equiv 2 \pmod 4$, then
\begin{eqnarray*}
	w(d^{n-1}_{i})
		&=&\ell(d^{n-2}_{i})+\ell(d^{n-2}_{i+1})+\ell(d^{0}_{i-1})+\ell(d^{0}_{i})\\
		&=&\ell(d^{0}_{i+(n/2-1)})+\ell(d^{0}_{i+1+(n/2-1)})+\ell(d^{0}_{i-1})+\ell(d^{0}_{i})\\
		&=&\ell(d^0_{i-1+n/2})+\ell(d^0_{i+n/2})+\ell(d^{0}_{i-1})+\ell(d^{0}_{i})\\
		&=&h'_{i-1}+h'_{i}+h_{i-1}+h_i\\
		&=&0.
\end{eqnarray*}

Similarly we obtain $w(d^{0}_{i})=0$. Hence the labeling is $\Gamma$-distance magic as desired.\end{proof}

Theorem~\ref{thm:C_n x C_n} now immediately implies the following.
\begin{cor}\label{cor:C_2^n}
	The graph $C_{2^n}\square C_{2^n}$ has a $\Gamma$-distance magic labeling for $n \geq 2$ and any Abelian group $\Gamma$ of order $2^{2n}$.
\end{cor}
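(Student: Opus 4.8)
The plan is to derive this as a direct specialization of Theorem~\ref{thm:C_n x C_n}. The key observation is that $2^n$ is an even integer for every $n \geq 2$, so the hypothesis ``$n$ even'' in Theorem~\ref{thm:C_n x C_n} (with the role of $n$ there played by $2^n$ here) is satisfied. Applying that theorem with the cycle length equal to $2^n$ immediately yields that $C_{2^n} \square C_{2^n}$ admits a $\Gamma$-distance magic labeling for every Abelian group $\Gamma$ of order $(2^n)^2 = 2^{2n}$, which is exactly the statement of the corollary.

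Concretely, I would write: since $n \geq 2$, the integer $2^n$ is even, hence by Theorem~\ref{thm:C_n x C_n} applied to the graph $C_{2^n} \square C_{2^n}$ we obtain a $\Gamma$-distance magic labeling for any Abelian group $\Gamma$ of order $(2^n)^2 = 2^{2n}$. That is the entire argument; there is essentially nothing to prove beyond invoking the already-established theorem.

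There is no real obstacle here — the only thing to be careful about is the notational clash, since the letter $n$ plays two different roles (the parameter in the corollary versus the generic even number in Theorem~\ref{thm:C_n x C_n}). I would make sure the write-up is explicit that we are substituting the even number $2^n$ for the ``$n$'' of Theorem~\ref{thm:C_n x C_n}, so that no reader is confused. One could also remark that this strengthens Theorem~\ref{thm:C_2^n_by_Z_2}, which only handled the group $(\zet_2)^{2n}$, to arbitrary Abelian groups of order $2^{2n}$, and that combined with Theorem~\ref{thm:cart_cycle_integers} it confirms these graphs are group distance magic without being distance magic.
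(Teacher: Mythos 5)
Your proposal is correct and matches the paper exactly: the paper states that Theorem~\ref{thm:C_n x C_n} ``now immediately implies'' the corollary, i.e., one substitutes the even integer $2^n$ for the cycle length and reads off the conclusion for any Abelian group of order $2^{2n}$. Your extra care about the notational clash and the remark connecting this to Theorem~\ref{thm:C_2^n_by_Z_2} are fine but not needed.
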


\section{Necessary  conditions}\label{sec:necessary}

Now we present theorems showing that if we have a group $\Gamma\cong\zet_{p_1^{\alpha_1}}\times\zet_{p_2^{\alpha_2}}\times\ldots\times\zet_{p_k^{\alpha_k}}$ with elements $(g_1,g_2,\dots,g_k)$  and the exponent $r=\exp(\Gamma)$ is rather small compared with the length of the diagonal of $C_{m} \square  C_{n}$, then there is no $\Gamma$-distance magic labeling of the cycle product. In other words, the results are showing that if
some entries $g_i$ of $(g_1,g_2,\dots,g_k)\in\Gamma$ would have to repeat too many times, then such labeling does not exist.

For a positive integer $m$  define a function

$$f(m)=\left\{
\begin{array}{lll}
m/4&\textrm{if}&m\equiv0 \pmod 4,\\
m/2 &\textrm{if}&m\equiv2 \pmod 4,\\
m &\textrm{if}&m\equiv1 \pmod 2.
\end{array}\right.$$

\begin{thm}\label{thm:main-f} 
	Let $\Gamma$ be an Abelian group of an even order $mn$ with exponent $r$. If $2r\min\{f(m),f(n)\} <\lcm(m,n)$, then there does not exist a $\Gamma$-distance magic labeling of the Cartesian product   $C_{m} \square  C_{n}$.
\end{thm}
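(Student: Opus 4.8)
The plan is to study how the labels distribute along the $d=\gcd(m,n)$ diagonals of $C_m\square C_n$ and to show that each diagonal can receive only a few distinct group elements — too few for the $mn$ vertices to be labelled injectively. Since $C_m\square C_n\cong C_n\square C_m$ and the hypothesis is symmetric in $m,n$, \wlg that $f(n)\le f(m)$, so $\min\{f(m),f(n)\}=f(n)$; note also that $mn$, hence $l=\lcm(m,n)$, is even.

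I would index the diagonals by $d^c_k=x_{k\bmod m,\,(c+k)\bmod n}$ for $c\in\zet_n$, $k\in\zet_l$, so that $D^c=(d^c_0,\dots,d^c_{l-1})$, $D^{c+n}=D^c$, and the vertex sets of $D^0,\dots,D^{d-1}$ partition $V(C_m\square C_n)$ into $d$ classes of size $l$. A direct coordinate computation gives
\[
N(d^c_k)=\{d^{c-1}_k,\ d^{c-1}_{k+1},\ d^{c+1}_{k-1},\ d^{c+1}_k\},
\]
so, setting $U^c_k=\ell(d^c_k)+\ell(d^c_{k+1})$ (the consecutive pair-sums along diagonals), the magic condition reads $U^{c-1}_k+U^{c+1}_{k-1}=\mu$ for all $c\in\zet_n$, $k\in\zet_l$, and in particular $U^{c+n}=U^c$. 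Comparing this identity at $(c,k)$ and at $(c+2,k-1)$ gives the recurrence $U^c_k=U^{c+4}_{k-2}$. Iterating it $f(n)=n/\gcd(n,4)$ times returns the superscript to $c$ modulo $n$ (since $4f(n)\equiv 0\pmod n$) while moving the subscript by $2f(n)$, so $U^c_k=U^c_{k-2f(n)}$. Hence, for each fixed $c$, the sequence $k\mapsto U^c_k$ has period $P$ dividing $\gcd(l,2f(n))$; in particular $P\mid 2f(n)$, and if $P$ is odd then also $P\mid f(n)$.

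Next I would push periodicity from the pair-sums down to the labels. On a fixed diagonal, put $\beta_k=\ell(d^c_k)$; then $\beta_{k+1}=U^c_k-\beta_k$, so $\beta_{k+2}-\beta_k=U^c_{k+1}-U^c_k$ is a sequence of period $P$. If $P$ is odd, summing this difference over one period telescopes to $0$, so $\beta_{k+2P}=\beta_k$ and $\beta$ assumes at most $2P\le 2f(n)$ values. If $P$ is even, a short calculation yields $\beta_{k+P}=\beta_k+(-1)^k\tau$ for a constant $\tau$ (an alternating sum of the $U^c_a$, $0\le a<P$); since $P$ is even, each residue class of $\zet_l$ modulo $P$ has a single parity, so $\beta$ restricted to it is an arithmetic progression with common difference $\pm\tau$, which has order at most $\exp(\Gamma)=r$, whence $\beta$ assumes at most $Pr\le 2f(n)r$ values. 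Either way a single diagonal carries at most $2rf(n)=2r\min\{f(m),f(n)\}$ distinct elements of $\Gamma$. Summing over the $d$ diagonals, which partition $V$, and using that $\ell$ is a bijection onto $\Gamma$, I would obtain $mn\le 2dr\min\{f(m),f(n)\}$, i.e.\ $l=mn/d\le 2r\min\{f(m),f(n)\}$, contradicting the hypothesis.

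The main obstacle — and the reason the last step is phrased as a global count over all $d$ diagonals rather than an argument on one — is the case $d=\gcd(m,n)\ge 2$: there $\ell$ is not injective along a single diagonal, so the per-diagonal bound $2rf(n)$ is not by itself contradictory, and one really must add up all $d$ of these bounds and invoke injectivity of $\ell$ on the whole vertex set. (When $d=1$ the per-diagonal count $2rf(n)<l$ is itself the contradiction.) The remaining ingredients are routine: the coordinate verification of $N(d^c_k)$, which must be written so as to remain valid when $d\le 2$ (where $D^{c-1}$ and $D^{c+1}$ have equal vertex sets but are indexed differently — harmless, since the superscripts are always kept in $\zet_n$), and the parity case-split for $P$, which is where evenness of $l$ is used.
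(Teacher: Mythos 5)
Your proof is correct and follows essentially the same route as the paper: both extract from the magic condition the periodicity $U^c_k=U^c_{k-2\min\{f(m),f(n)\}}$ of consecutive pair-sums along a diagonal and then use the exponent $r$ to force a repeated label within $2r\min\{f(m),f(n)\}<\lcm(m,n)$ steps of the diagonal (the paper does this by an alternating telescoping sum along the main diagonal only). Two of your precautions are unnecessary: since $\ell$ is a global bijection it is automatically injective on the $l$ distinct vertices of a single diagonal, so the per-diagonal bound alone already contradicts the hypothesis without summing over all $d$ diagonals, and the odd-$P$ case can be avoided by working directly with the (even) period $2\min\{f(m),f(n)\}$ instead of the minimal period.
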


\begin{proof}
 For the sake of contradiction, suppose that there exists a  $\Gamma$-distance magic labeling $\ell$ of the Cartesian product   $C_{m} \square  C_{n}$ with magic constant $\mu$.   By our assumption, we have $mn$  even. Without loss of generality we can assume that $m<n$ and  $\ell(x_{0,0})=0$.
 Let us consider the weights of
$x_{0,1}$ and $x_{m-1,2}$:
$$
   w(x_{0,1})=\ell(x_{0,0})+\ell(x_{1,1})+\ell(x_{m-1,1})+\ell(x_{0,2})
$$
and
$$
   w(x_{m-1,2})=\ell(x_{m-1,1})+\ell(x_{0,2})+\ell(x_{m-2,2})+\ell(x_{m-1,3}).
$$
Because we assumed that $\ell$ is a  $\Gamma$-distance magic labeling, we have $w(x_{0,1})=w(x_{m-1,2})=\mu$, which yields
$$
\ell(x_{0,0})+\ell(x_{1,1})+\ell(x_{m-1,1})+\ell(x_{0,2})=\ell(x_{m-1,1})+\ell(x_{0,2})+\ell(x_{m-2,2})+\ell(x_{m-1,3}).
$$
and hence
$$
\ell(x_{0,0})+\ell(x_{1,1})=\ell(x_{m-2,2})+\ell(x_{m-1,3}).
$$
Similarly, comparing weights of vertices $x_{m-2,3}$ and $x_{m-3,4}$ we obtain
\begin{align*}
   \ell(x_{m-2,2})+\ell(x_{m-1,3})+\ell(x_{m-3,3})+\ell(x_{m-2,4})=\\
   \ell(x_{m-3,3})+\ell(x_{m-2,4})+\ell(x_{{m}-4,4})+\ell(x_{{m}-3,5})
\end{align*}
and thus
$$
\ell(x_{m-2,2})+\ell(x_{m-1,3})=\ell(x_{{m}-4,4})+\ell(x_{{m}-3,5}).
$$
which implies
$$
\ell(x_{0,0})+\ell(x_{1,1})=\ell(x_{{m}-4,4})+\ell(x_{{m}-3,5}).
$$
Repeating that procedure we conclude that
$$
   \ell(x_{0,0})+\ell(x_{1,1})=\ell(x_{-2\alpha,2\alpha})+\ell(x_{1-2\alpha,1+2\alpha})=a_0
$$
for some $a_0\in \Gamma$ and any natural number $\alpha$.

Recall that by the {main diagonal} of $C_m \square C_n$ we mean the cyclic sequence of vertices $(x_{0,0},x_{1,1},\ldots,$ $x_{m-1,m-1},x_{0,m}, x_{1,m+1},$ $\ldots,x_{m-1,n-1})$ of length $l= \lcm(m,n)$.  We now consider the following system of equations, going along the main diagonal. Notice that the subscripts need to be read modulo $m$ and $n$, respectively. So, for instance, when $m<n$, the vertex denoted by $x_{m,m}$ is in fact $x_{0,m}$.

Analogously, we get
\begin{align*}
\ell(x_{j,j})+\ell(x_{j+1,j+1})=\ell(x_{j-2\alpha,j+2\alpha})+\ell(x_{j+1-2\alpha,j+1+2\alpha})=a_j
\end{align*}
for some $a_j\in \Gamma$ and any natural number $\alpha$.

Note that $x_{-2\alpha,2\alpha}$ belongs to the same diagonal as $x_{0,0}$ for $2\alpha\equiv-2\alpha\pmod m$ (then $x_{-2\alpha,2\alpha}=x_{2\alpha,2\alpha}$) or $2\alpha\equiv-2\alpha \pmod n$ (then $x_{-2\alpha,2\alpha}=x_{-2\alpha,-2\alpha}$), what happens for both $\alpha=f(m)$ and $\alpha=f(n)$.

This implies that taking $k=\min\{f(m),f(n)\}$ we obtain:
\begin{align}\label{eq:rownanie}
\ell(x_{j,j})+\ell(x_{j+1,j+1})=\ell(x_{j+2k,j+2k})+\ell(x_{j+1+2k,j+1+2k}).
\end{align}

 Note that $x_{2rk,2rk}\neq x_{0,0}$ since $2rk< \lcm(m,n)$.   Also, the elements $a_0,a_1,\dots,a_{2k-1}$ are not necessarily all distinct.

\begin{align}\label{uklad}
\left\{
\begin{array}{lll}
  \ell(x_{0,0})&+\ell(x_{1,1}) & =a_0 \\
  \ell(x_{1,1})&+\ell(x_{2,2}) & =a_1 \\
     \vdots\\
  \ell(x_{2k-1,2k-1})&+\ell(x_{2k,2k})&=a_{2k-1} \\
  \ell(x_{2k,2k})&+\ell(x_{2k+1,2k+1}) & =a_0 \\
  \ell(x_{2k+1,2k+1})&+\ell(x_{2k+2,2k+2}) & =a_1 \\
     \vdots\\
  \ell(x_{4k-1,4k-1})&+\ell(x_{4k,4k})&=a_{2k-1} \\
	 \vdots\\
  \ell(x_{2(r-1)k,2(r-1)k})&+\ell(x_{2(r-1)k+1,2(r-1)k+1}) & =a_0 \\
  \ell(x_{2(r-1)k+1,(r-1)k+1})&+\ell(x_{2(r-1)k+2,2(r-1)k+2}) & =a_1 \\
     \vdots\\
  \ell(x_{2rk-1,2rk-1})&+\ell(x_{2rk,2rk})&=a_{2k-1} .\\
\end{array}\right.
\end{align}
Multiplying every other equation by $-1$, starting with
$$-\ell(x_{1,1})-\ell(x_{2,2})  =-a_1,$$
and adding all equations, we obtain
$$
\ell(x_{0,0})-\ell(x_{2rk,2rk})  =r\sum_{i=0}^{2k-1}(-1)^{i}a_i.
$$
Recall that $\ell(x_{0,0})=0$. Because $r=\exp(\Gamma)$, we have $r\sum_{i=0}^{2k-1}(-1)^{i}a_i=0$. This implies
$-\ell(x_{2rk,2rk})=0$, which is a contradiction, because the labeling is injective and we have assumed that  $x_{2rk,2rk}\neq x_{0,0}$.
\end{proof}

The following theorem gives a similar result in terms of a more obvious bound, using $\gcd(m,n)$, the number of diagonals in $C_{m} \square  C_{n}$.

\begin{thm}\label{thm:gcd} Let $\Gamma$ be an Abelian group of an even order $mn$ with exponent $r$. If $2r\gcd(m,n) <\lcm(m,n)$, then there does not exist a $\Gamma$-distance magic labeling of the Cartesian product   $C_{m} \square  C_{n}$.
\end{thm}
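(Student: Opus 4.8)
The plan is to re-use the argument in the proof of Theorem~\ref{thm:main-f} essentially unchanged, with $k=\min\{f(m),f(n)\}$ replaced by $d=\gcd(m,n)$. Suppose for contradiction that $C_m\square C_n$ has a $\Gamma$-distance magic labeling $\ell$ with $\ell(x_{0,0})=0$. Comparing weights along the main diagonal gives, exactly as in that proof, for all $j$ and every integer $\alpha$,
$$a_j:=\ell(x_{j,j})+\ell(x_{j+1,j+1})=\ell(x_{j-2\alpha,j+2\alpha})+\ell(x_{j+1-2\alpha,j+1+2\alpha}).$$
Regarding the index of $a$ as living in $\zet_{\lcm(m,n)}$, the set $P=\{p:\ a_j=a_{j+p}\text{ for all }j\}$ of periods of $a$ is a subgroup of $\zet_{\lcm(m,n)}$; the crux is to show $2d\in P$.

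First I would record, as in the proof of Theorem~\ref{thm:main-f}, that with $\alpha=f(m)$ one has $m\mid 4\alpha$, so $x_{j-2\alpha,j+2\alpha}=x_{j+2\alpha,j+2\alpha}$ and $x_{j+1-2\alpha,j+1+2\alpha}=x_{j+1+2\alpha,j+1+2\alpha}$ form a consecutive pair on the main diagonal, whence $a_j=a_{j+2f(m)}$ for all $j$; symmetrically, $\alpha=f(n)$ gives $n\mid 4\alpha$ and $a_j=a_{j-2f(n)}$ for all $j$. Thus $2f(m),2f(n)\in P$, and since $P$ is a subgroup of $\zet_{\lcm(m,n)}$ it contains $\gcd\bigl(2f(m),2f(n),\lcm(m,n)\bigr)$, which divides $2\gcd(f(m),f(n))$.

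Next I would observe that $f(t)\mid t$ for every positive integer $t$, since each of $t/4,\ t/2,\ t$ divides $t$. Hence $\gcd(f(m),f(n))\mid\gcd(m,n)=d$, so $P$ contains a divisor of $2d$ and, being a subgroup of $\zet_{\lcm(m,n)}$, therefore contains $2d$ itself. (If $2d\geq\lcm(m,n)$ the hypothesis $2rd<\lcm(m,n)$ cannot hold, so we may assume $0<2d<\lcm(m,n)$.) Thus $a_j=a_{j+2d}$ for all $j$.

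Finally I would repeat the telescoping of Theorem~\ref{thm:main-f} with $d$ in place of $k$: since $a_j=a_{j+2d}$, the system~\eqref{uklad} becomes $r$ blocks of $2d$ equations $\ell(x_{i,i})+\ell(x_{i+1,i+1})=a_{i\bmod 2d}$ for $i=0,\dots,2rd-1$; multiplying every other equation by $-1$ and adding telescopes the left-hand sides to $\ell(x_{0,0})-\ell(x_{2rd,2rd})$ and collapses the right-hand sides to $r\sum_{i=0}^{2d-1}(-1)^i a_i=0$, because $r=\exp(\Gamma)$. Hence $\ell(x_{2rd,2rd})=\ell(x_{0,0})$, which contradicts injectivity of $\ell$ since $0<2rd<\lcm(m,n)$ forces $x_{2rd,2rd}\neq x_{0,0}$. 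The only real work here is the bookkeeping that $2f(m),2f(n)$ genuinely lie in $P$ and the divisibility chain $\gcd(2f(m),2f(n),\lcm(m,n))\mid 2\gcd(f(m),f(n))\mid 2d$; no idea beyond Theorem~\ref{thm:main-f} is needed, the single new observation being $f(t)\mid t$, which is precisely what lets one trade $\min\{f(m),f(n)\}$ for $\gcd(m,n)$.
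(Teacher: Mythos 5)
Your proof is correct, but it reaches the key periodicity $a_j=a_{j+2\gcd(m,n)}$ by a genuinely different route than the paper. The paper re-runs the weight-comparison argument along the \emph{backward} diagonals: since there are exactly $\gcd(m,n)$ backward diagonals of length $\lcm(m,n)$, the pair $\bigl(x_{-2\alpha,2\alpha},x_{1-2\alpha,1+2\alpha}\bigr)$ returns to the main diagonal after every $\gcd(m,n)$ forward diagonals are traversed, which yields the relations $\ell(x_{2jk,2jk})+\ell(x_{2jk+1,2jk+1})=c_1$ (and their analogues for the other starting points) directly, and hence the same system~\eqref{uklad} with $k=\gcd(m,n)$. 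You instead stay entirely on the main diagonal: you extract \emph{both} periods $2f(m)$ and $2f(n)$ from the relation already established in the proof of Theorem~\ref{thm:main-f} (where the paper only uses the smaller of the two), observe that the set of periods of the sequence $(a_j)$ is a subgroup of $\zet_{\lcm(m,n)}$, and use $f(t)\mid t$ to conclude that $2\gcd(m,n)$ lies in that subgroup; the telescoping step is then identical. Your argument is arguably cleaner and in fact proves slightly more: the period you obtain is $\gcd\bigl(2f(m),2f(n),\lcm(m,n)\bigr)$, which divides both $2\min\{f(m),f(n)\}$ and $2\gcd(m,n)$, so the same computation simultaneously yields Theorem~\ref{thm:main-f}, Theorem~\ref{thm:gcd}, and the marginally stronger bound with $2r\gcd(f(m),f(n))$ in place of either quantity. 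The paper's backward-diagonal derivation, by contrast, is self-contained and does not pass through the function $f$ at all.
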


\begin{proof}
We again use  contradiction and assume that there exists a  $\Gamma$-distance magic labeling $\ell$ of   $C_{m} \square  C_{n}$ with magic constant $\mu$ and $mn$ is even, $m<n$ and  $\ell(x_{0,0})=0$.

By the \emph{first backward diagonal} of $C_m \square C_n$ we mean the cyclic sequence of vertices
$(x_{0,1},x_{1,0},x_{2,n-1}\ldots,$ $x_{m-1,n-m+2},x_{0,n-m+1},x_{1,n-m},\ldots,x_{m-1,2})$
of length $l = \lcm(m,n)$.
Similarly, the  sequence
$(x_{0,2},x_{1,1},x_{2,0}\ldots,x_{m-1,n-m+3},$
\linebreak
$
x_{0,n-m+2},
x_{1,n-m+1},\ldots,x_{m-1,3})$ is the  \emph{second backward diagonal} and so on.

Set $k=\gcd(m,n)$. Because the length of each backward diagonal is $\lcm(m,n)$, there are $k$ backward diagonals, and the vertices
$
x_{0,k+1},x_{1,k},x_{2,k-1}$ $\ldots,
x_{0,2k+1},x_{1,2k},x_{2,2k-1}\ldots,
$ $x_{k,k+1},\ldots, x_{0,3k+1},x_{1,3k},x_{2,3k-1},\ldots,x_{2k,2k+1}, \dots
$
belong to the same diagonal as $x_{0,1}$.

We look at weights of vertices of that sequence, starting at $x_{0,1}$ and going the opposite direction, that is, $w(x_{0,1}),w(x_{m-1,2}),w(x_{m-2,3}),\ldots,w(x_{1,0})$.

The weights of
$x_{0,1}$ and $x_{m-1,2}$ are again
$$
w(x_{0,1})=\ell(x_{0,0})+\ell(x_{1,1})+\ell(x_{m-1,1})+\ell(x_{0,2})=\mu
$$
and
$$
w(x_{m-1,2})=\ell(x_{m-1,1})+\ell(x_{0,2})+\ell(x_{m-2,2})+\ell(x_{m-1,3})=\mu.
$$
Comparing the above equalities, we get
$$
\ell(x_{0,0})+\ell(x_{1,1})=\ell(x_{m-2,2})+\ell(x_{m-1,3}).
$$
Continuing this way, we obtain
\begin{align}\label{eq:rownanie-gcd-a_0}
\ell(x_{0,0})+\ell(x_{1,1})=\ell(x_{-2\alpha_0,2\alpha_0})+\ell(x_{1-2\alpha_0,1+2\alpha_0})=c_1
\end{align}
for some $c_1\in \Gamma$ and any natural number $\alpha_0$. In particular, because there are $k$  diagonals and $x_{2k,2k+1}$ belongs to the same backward diagonal as $x_{0,1}$, we must also have\\
%

%
$$\ell(x_{0,0})+\ell(x_{1,1})=\ell(x_{2k,2k})+\ell(x_{2k+1,2k+1})=c_1,$$
$$\ell(x_{0,0})+\ell(x_{1,1})=\ell(x_{4k,4k})+\ell(x_{4k+1,4k+1})=c_1,$$
$$\vdots$$
$$\ell(x_{0,0})+\ell(x_{1,1})=\ell(x_{2(r-1)k,2(r-1)k})+\ell(x_{2(r-1)k+1,2(r-1)k+1})=c_1.$$
%
Looking at the first backward diagonal again and starting to compare the weights at $x_{1,0},x_{0,1},x_{m-,2},\dots$ instead, we get
$$
w(x_{1,0})=\ell(x_{1,n-1})+\ell(x_{2,0})+\ell(x_{0,0})+\ell(x_{1,1})=\mu
$$
and
$$
w(x_{0,1})=\ell(x_{0,0})+\ell(x_{1,1})+\ell(x_{m-1,1})+\ell(x_{0,2})=\mu.
$$
 Comparing these equalities, we obtain
$$
\ell(x_{1,n-1})+\ell(x_{2,0})=\ell(x_{m-1,1})+\ell(x_{0,2})=d_1
$$
for some element $d_1$.

Comparing the weights of remaining vertices, we again have
$$
\ell(x_{1,n-1})+\ell(x_{2,0})=\ell(x_{1-2\beta_1,-1+2\beta_1})+\ell(x_{2-2\beta_1,2\beta_1})=d_1
$$
for any $\beta_1\in\zet$.

Proceeding in the same fashion, we obtain two such equalities for each diagonal. Ordering them conveniently and renaming the elements $c_i$ and $d_i$, we again obtain the same system of equations as in the previous theorem. Namely,

$$
\left\{
\begin{array}{lll}
\ell(x_{0,0})&+\ell(x_{1,1}) & =a_0 \\
\ell(x_{1,1})&+\ell(x_{2,2}) & =a_1 \\
\vdots\\
\ell(x_{2k-1,2k-1})&+\ell(x_{2k,2k})&=a_{2k-1} \\
\ell(x_{2k,2k})&+\ell(x_{2k+1,2k+1}) & =a_0 \\
\ell(x_{2k+1,2k+1})&+\ell(x_{2k+2,2k+2}) & =a_1 \\
\vdots\\
\ell(x_{4k-1,4k-1})&+\ell(x_{4k,4k})&=a_{2k-1} \\
\vdots\\
\ell(x_{2(r-1)k,2(r-1)k})&+\ell(x_{2(r-1)k+1,2(r-1)k+1}) & =a_0 \\
\ell(x_{2(r-1)k+1,(r-1)k+1})&+\ell(x_{2(r-1)k+2,2(r-1)k+2}) & =a_1 \\
\vdots\\
\ell(x_{2rk-1,2rk-1})&+\ell(x_{2rk,2rk})&=a_{2k-1} .\\
\end{array}\right.
$$
Solving it the same way as before, we again obtain
$$\ell(x_{2rk,2rk})=r\sum_{i=0}^{2k-1}(-1)^{i}a_i=0$$
by the property of group exponent $r$. But because $\ell(x_{0,0})=0$ and $x_{2rk,2rk}\neq x_{0,0}$, we get a contradiction, which completes the proof.
\end{proof}

	
	Notice that each of the above two theorems is useful in different scenarios. For instance, when $m\equiv0\pmod4$ and $n=mq$, then Theorem~\ref{thm:main-f} gives a stronger result; when $m$ is odd and $n\neq mq$, then it is better to use Theorem~\ref{thm:gcd}.

To illustrate the strength of the theorems above with a concrete example, we present two special cases separately. 

\begin{cor}\label{cor:many-Z_s}
	Let $\Gamma\cong (\zet_s)^t$ when $s$ is even or $\Gamma\cong (\zet_s)^t\times\zet_2$ when $s$ is odd, and $|\Gamma|=mn$. Then $C_m\square C_n$ does not have a $\Gamma$-distance magic labeling if $n>sm$.
\end{cor}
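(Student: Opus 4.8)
The plan is to derive the corollary from Theorems~\ref{thm:main-f} and~\ref{thm:gcd}, so the first step is to pin down $r=\exp(\Gamma)$ and $mn=|\Gamma|$. If $s$ is even then $\Gamma\cong(\zet_s)^t$, so $r=s$ and $mn=s^t$; if $s$ is odd then $\zet_s\times\zet_2\cong\zet_{2s}$, so $\Gamma\cong(\zet_s)^{t-1}\times\zet_{2s}$, giving $r=2s$ and $mn=2s^t$. In both cases $r\le 2s$ and, apart from at most a single factor $2$, every prime dividing $mn$ divides $s$. Since $n>sm\ge 2m$ we have $m<n$, hence $\gcd(m,n)\le m$, $\lcm(m,n)\ge n>sm$, and $\gcd(m,n)\lcm(m,n)=mn$.

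The second step is a short case analysis on the parity of $m$, which is dictated by $v_2(mn)$. If $m$ is even, then $f(m)\le m/2$, so $2r\min\{f(m),f(n)\}\le rm$; for $s$ even this is $sm<n\le\lcm(m,n)$, and for $s$ odd the equality $v_2(mn)=1$ forces $n$ odd, so $\lcm(m,n)=mn/\gcd(m,n)\ge 2n>2sm\ge rm$ --- in either subcase Theorem~\ref{thm:main-f} finishes. If $m$ is odd, then $n$ is even, and since $m\mid s^t$ with $m\ge 3$ the number $s$ has an odd prime divisor. If moreover $m\nmid n$, then $\gcd(m,n)$ is a proper odd divisor of $m$, hence $\gcd(m,n)\le m/3$ and $2r\gcd(m,n)\le\tfrac{2}{3}rm\le\tfrac{4}{3}sm<3sm<3n\le\lcm(m,n)$, so Theorem~\ref{thm:gcd} applies.

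The remaining case $m$ odd, $m\mid n$ is where the real work lies. Here $\gcd(m,n)=m$, $\lcm(m,n)=n$, and $q:=n/m>s$; also $m^{2}\mid mn=|\Gamma|$, so $q=|\Gamma|/m^{2}$ is a divisor of $|\Gamma|$ built only from primes of $s$ together with at most one factor $2$. Now $\lcm(m,n)$ divides $2r\gcd(m,n)=2rm$ precisely when $q\mid 2r$, and since $q>s$ and $2r\in\{2s,4s\}$, a short divisibility check (using that $m$ is odd, so $v_2(q)\le 1$ when $s$ is odd) leaves only $q=2s$. Thus when $q\ne 2s$ we have $\lcm(m,n)\nmid 2rm$; inspecting the proof of Theorem~\ref{thm:gcd} one sees that its telescoping sum then still forces a vertex $x_{2rm,2rm}\ne x_{0,0}$ to receive the label $0=\ell(x_{0,0})$, a contradiction. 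We are left with the rigid case $q=2s$, i.e.\ $n=2sm$, equivalently $s^{t-1}=2m^{2}$ for $s$ even (which forces $t=2$, $s=2m^{2}$) or $s^{t-1}=m^{2}$ with $t$ odd for $s$ odd.

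The main obstacle is exactly this last family: every bound in Theorems~\ref{thm:main-f} and~\ref{thm:gcd} degenerates to an equality, and the $\lcm\nmid 2rk$ refinement is no longer available. To dispose of it I would go back into the diagonal system underpinning those theorems and extract a second, independent collision of labels --- for instance by running the same alternating telescoping argument along a backward diagonal, or along an off-diagonal $D^{j}$ with a shift length incommensurable with $\lcm(m,n)$, or by superimposing the main-diagonal relations coming from two distinct diagonals --- thereby again contradicting the injectivity of $\ell$. I expect this to be the crux of the complete argument.
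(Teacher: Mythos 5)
Your write-up is not a proof: the last paragraph of your proposal explicitly leaves the residual family ($m$ odd, $m\mid n$, $n=2sm$) unresolved, offering only a list of strategies you ``would'' try. That is a genuine gap, and the case is non-empty: take $s=3$, $t=3$, $\Gamma\cong(\zet_3)^3\times\zet_2$, $m=3$, $n=18$. Then $n>sm$, $r=\exp(\Gamma)=6$, $\gcd(m,n)=3=\min\{f(3),f(18)\}$, and $2r\gcd(m,n)=36>18=\lcm(m,n)$, so neither Theorem~\ref{thm:main-f} nor Theorem~\ref{thm:gcd} applies, and your $\lcm(m,n)\nmid 2r\gcd(m,n)$ refinement fails too since $18\mid 36$. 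So the argument you sketch in the final paragraph would have to be carried out in full before the corollary is established along your lines.

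That said, the comparison with the paper cuts in your favour. The paper's entire proof is the sentence ``Follows directly from Theorem~\ref{thm:gcd},'' and your case analysis shows this is not literally true: from $\gcd(m,n)\le m$ and $\lcm(m,n)\ge n>sm$ one only gets $r\gcd(m,n)<\lcm(m,n)$ when $s$ is even (and is off by a factor of $4$ when $s$ is odd, where $r=2s$), and concrete admissible instances violate the hypothesis of Theorem~\ref{thm:gcd} --- e.g.\ $C_4\square C_{16}$ with $(\zet_2)^6$, where $2r\gcd(m,n)=16=\lcm(m,n)$ and one must fall back on Theorem~\ref{thm:main-f} exactly as you do, or the $C_3\square C_{18}$ example above, where both theorems fail. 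Worse, in that last example the paper's closing conjecture ($2r\min\{f(m),f(n),\gcd(m,n)\}\ge\lcm(m,n)$ iff a labeling exists) predicts that a labeling \emph{does} exist, in direct tension with the corollary. So your inability to close the final case is not a defect of your route relative to the paper's: the paper's one-line proof has the same hole. Either the corollary needs a stronger hypothesis (e.g.\ $n>2rm$, which genuinely is an immediate consequence of Theorem~\ref{thm:gcd}), or the residual family requires a new argument that neither you nor the paper supplies.
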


\begin{proof}
	Follows directly from Theorem~\ref{thm:gcd}. 
\end{proof}

The following non-existence result  is in a certain sense `dual' to the statement above. We show that for a given cycle length $m$, we can always find a corresponding long cycle $C_n$ and an Abelian group $\Gamma$ which does not provide a labeling of $C_m\square C_n$.

\begin{obs}\label{obs:long-C_n-m-only}
	For any positive integer $m$ there exists $n$ for which the Cartesian product $C_m\square C_n$ is not group distance magic. That is, there exists an Abelian group $\Gamma$ such that $C_m\square C_n$ is not $\Gamma$-distance magic.
\end{obs}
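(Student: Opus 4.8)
The plan is to reduce the whole statement to Theorem~\ref{thm:gcd} by choosing $n$ to be a large multiple of $m$, so that the diagonal length $\lcm(m,n)$ grows while the number of diagonals $\gcd(m,n)$ stays equal to $m$. Concretely, for a fixed $m$ (we may assume $m\ge 3$, since $C_1,C_2$ are not simple graphs) I would set $n=m\cdot 2^{k}$ for an integer $k$ to be chosen later. Then $\gcd(m,n)=m$ and $\lcm(m,n)=m\cdot 2^{k}=n$, so the hypothesis $2r\gcd(m,n)<\lcm(m,n)$ of Theorem~\ref{thm:gcd} becomes simply $2r<2^{k}$, where $r=\exp(\Gamma)$ and $\Gamma$ is any Abelian group of order $mn=m^{2}2^{k}$. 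Hence everything comes down to producing, for every $k$, such a group whose exponent stays bounded independently of $k$.

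For the construction of $\Gamma$ I would take the group of order $m^{2}2^{k}$ with the smallest possible exponent. The primes dividing $m^{2}2^{k}$ are exactly the primes dividing $2m$, and this set does not depend on $k$; writing $e_{p}$ for the multiplicity of $p$ in $m^{2}2^{k}$, set
$$
	\Gamma=\prod_{p\mid 2m}(\zet_{p})^{e_{p}}.
$$
One checks that $e_{p}\ge 1$ for every $p\mid 2m$ (for an odd prime $p\mid m$ one has $e_{p}=2v_{p}(m)\ge 2$, and $e_{2}\ge k\ge 1$), so $|\Gamma|=m^{2}2^{k}$, the order is even, and $\exp(\Gamma)=\prod_{p\mid 2m}p=\mathrm{rad}(2m)$, a quantity bounded above by $2m$ and independent of $k$.

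To finish, choose $k$ large enough that $2^{k}>4m$; since $\mathrm{rad}(2m)\le 2m$ this guarantees $2\exp(\Gamma)=2\,\mathrm{rad}(2m)<2^{k}$, i.e. $2\exp(\Gamma)\gcd(m,n)=2\,\mathrm{rad}(2m)\cdot m<m\cdot 2^{k}=\lcm(m,n)$. Theorem~\ref{thm:gcd} then yields that $C_{m}\square C_{n}$ has no $\Gamma$-distance magic labeling, so it is not group distance magic, which is exactly what the observation claims.

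I do not expect any genuine obstacle here: Theorem~\ref{thm:gcd} does all the work, and the only thing to verify is the elementary fact that a group of order $m^{2}2^{k}$ can be forced to have the fixed small exponent $\mathrm{rad}(2m)$ while the diagonal length grows without bound. The same argument could instead be run through Theorem~\ref{thm:main-f} (using $f(n)=n/4$ since $4\mid n$) or through Corollary~\ref{cor:many-Z_s}; I would present the version above because the choice $n=m\cdot 2^{k}$ makes the required inequality completely transparent.
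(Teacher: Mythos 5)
Your proposal is correct and takes essentially the same approach as the paper: both choose $n$ to be a multiple of $m$ (the paper uses $n=2m^3$ with $\Gamma\cong(\zet_m)^4\times\zet_2$, so $\exp(\Gamma)\le 2m$) and then apply Theorem~\ref{thm:gcd}. Your version merely parametrizes the same idea with $n=m\cdot 2^k$ and the minimal-exponent group of order $mn$.
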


\begin{proof}
	Let $n=2m^3$ and $\Gamma\equiv(\zet_m)^4\times\zet_2$. Then $\exp(\Gamma)\leq2m$ and $\gcd(m,n)=m$. Hence we have
	$2r\gcd(m,n)\leq 4m^2<2m^3$, because $m\geq3$. But $\lcm(m,n)=n=2m^3$, and the product $C_m\square C_n$ is not $\Gamma$-distance magic by Theorem~\ref{thm:gcd}.
\end{proof}

The following extreme case is worth mentioning.

\begin{obs}\label{gdc} 
Let  $\gcd(m,n)=1$. There  exists a $\Gamma$-distance magic labeling of the Cartesian product   $C_{m} \square  C_{n}$ if and only if $mn$ is even and either $\Gamma\cong \zet_2\times \zet_{mn/2}$ or
$\Gamma\cong \zet_{mn}$.
\end{obs}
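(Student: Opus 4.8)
The plan is to prove the two implications separately; throughout write $N=mn$ and recall that $\gcd(m,n)=1$ forces $\lcm(m,n)=N$. For the \emph{sufficiency} direction, assume $N$ is even. If $\Gamma\cong\zet_{N}$ then $C_m\square C_n$ is $\Gamma$-distance magic directly by Theorem~\ref{thm:cart_cycle_cyclic}. If instead $\Gamma\cong\zet_2\times\zet_{N/2}$, I would apply Theorem~\ref{thm:lcm/2} with $l=\lcm(m,n)=N$, taking $\alpha=N/2$ (so that $\alpha\equiv0\pmod{l/2}$) and $\gA\cong\zet_2$ of order $N/\alpha=2$; this yields a $\zet_{N/2}\times\zet_2$-distance magic labeling, and $\zet_{N/2}\times\zet_2\cong\zet_2\times\zet_{N/2}$.

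For the \emph{necessity} direction, suppose $\ell$ is a $\Gamma$-distance magic labeling of $C_m\square C_n$ with $|\Gamma|=N$. First, $N$ must be even: if $N$ were odd, then $m$ and $n$ would both be odd, and Theorem~\ref{thm:odd-m,n} would forbid any such labeling. Next set $r=\exp(\Gamma)$. By the contrapositive of Theorem~\ref{thm:gcd}, the existence of $\ell$ forces $2r\gcd(m,n)\ge\lcm(m,n)$, that is $2r\ge N$ since $\gcd(m,n)=1$; hence $r\ge N/2$. Since $\Gamma$ is Abelian, its exponent divides its order, so $r\mid N$. Combining these, $r\in\{N/2,\,N\}$.

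It then remains to read off the isomorphism type of $\Gamma$ from $r$ using the Fundamental Theorem of Finite Abelian Groups. Write the invariant-factor decomposition $\Gamma\cong\zet_{d_1}\times\cdots\times\zet_{d_s}$ with $d_1\mid d_2\mid\cdots\mid d_s$, so that $d_1 d_2\cdots d_s=N$ and $d_s=\exp(\Gamma)=r$. If $r=N$ then $s=1$ and $\Gamma\cong\zet_{N}$. If $r=N/2$ then $d_1\cdots d_{s-1}=2$, which forces $s=2$, $d_1=2$, $d_2=N/2$, i.e.\ $\Gamma\cong\zet_2\times\zet_{N/2}$. This exhausts both cases and closes the proof. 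The argument involves no genuinely hard step; the one point needing care is to invoke Theorem~\ref{thm:gcd} rather than Theorem~\ref{thm:main-f}, since with $\gcd(m,n)=1\le\min\{f(m),f(n)\}$ the former gives the sharper lower bound $r\ge N/2$, which is exactly what pins $\Gamma$ down. One should also note the degenerate boundary situation $N\equiv2\pmod4$, in which $\zet_2\times\zet_{N/2}\cong\zet_{N}$ so the stated dichotomy collapses to a single group, consistently with the fact that the case $r=N/2$ then becomes vacuous (the divisibility $d_1\mid d_2$ would require $4\mid N$).
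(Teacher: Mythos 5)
Your proposal is correct and follows essentially the same route as the paper: sufficiency via Theorems~\ref{thm:cart_cycle_cyclic} and~\ref{thm:lcm/2} with $\alpha=mn/2$, and necessity via Theorem~\ref{thm:odd-m,n} plus the contrapositive of Theorem~\ref{thm:gcd} combined with the classification of Abelian groups of order $N$ with exponent at least $N/2$ (the paper quotes this as a known fact, you derive it from the invariant-factor decomposition, and you correctly handle the $N\equiv2\pmod4$ collapse). As a minor point in your favour, you cite the correct existence theorems where the paper's proof mistakenly references its Conjecture~\ref{conj:DM->GDM}.
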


\begin{proof}
	For $\Gamma\cong \zet_{mn}$ and $\Gamma\cong \zet_2\times \zet_{mn/2}$ the labelings exist by Theorem~\ref{conj:DM->GDM}. 
	
	Because $\gcd(m,n)=1$, we cannot have both $m$ and $n$ even, so without loss of generality $m$ is odd.  If $n$ is odd, then $C_m \square C_n$ is not $\Gamma$-distance magic graph for  any Abelian group $\Gamma$ of order $mn$ by Theorem~\ref{thm:odd-m,n}. 
	
It is well known that for an Abelian group $\Gamma$ of order $2k$, $\exp(\Gamma) =k$ if and only if $k$ is even and $\Gamma\cong\zet_2\times\zet_k$. 
	
	Therefore, if  $\Gamma\ncong \zet_{mn}$ and $\Gamma\ncong \zet_2\times \zet_{mn/2}$, we must have  $\exp(\Gamma)< mn/2$. On the other hand, we have $\lcm(m,n)=mn$, because $\gcd(m,n)=1$. Hence 
	$2r\gcd(m,n)<mn=\lcm(m,n)$ and 
	there does not exist a  $\Gamma$-distance magic labeling of   $C_{m} \square  C_{n}$ by Theorem~\ref{thm:gcd}.
\end{proof}

Finally, we complement Theorem~\ref{thm:C_2^n_by_Z_2} and Observation~\ref{obs:C_2^n} by the following related result.

\begin{thm}\label{obs:C_2^n}
	There  exists a $(\zet_2)^{m+n}$-distance magic labeling of the Cartesian product   $C_{2^m} \square  C_{2^{n}}$ if and only if $2\leq m =n$.
\end{thm}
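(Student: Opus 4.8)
The statement has an existence (\emph{if}) part and a non-existence (\emph{only if}) part, and the plan is to settle each by a short computation followed by an appeal to an earlier result. For the \emph{if} direction, if $2\le m=n$ then $C_{2^m}\square C_{2^m}$ admits a $(\zet_2)^{2m}$-distance magic labeling by Theorem~\ref{thm:C_2^n_by_Z_2} (equivalently, by Corollary~\ref{cor:C_2^n}), so no new construction is needed there.

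For the \emph{only if} direction I would argue as follows. Suppose $m\ne n$; since $C_{2^m}\square C_{2^n}\cong C_{2^n}\square C_{2^m}$ we may assume $m<n$, and as both factors are cycles, $m,n\ge 2$. Put $\Gamma=(\zet_2)^{m+n}$, so $|\Gamma|=2^m\cdot 2^n$ equals the number of vertices and $r=\exp(\Gamma)=2$. Since $2^m\equiv 2^n\equiv 0\pmod 4$ we get $f(2^m)=2^{m-2}$ and $f(2^n)=2^{n-2}$, whence $\min\{f(2^m),f(2^n)\}=2^{m-2}$ and
\[
2r\,\min\{f(2^m),f(2^n)\}=2\cdot 2\cdot 2^{m-2}=2^m<2^n=\lcm(2^m,2^n),
\]
the inequality being strict precisely because $m<n$. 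Theorem~\ref{thm:main-f} then yields that $C_{2^m}\square C_{2^n}$ has no $\Gamma$-distance magic labeling, which is exactly what is claimed. Together with the \emph{if} direction this gives the asserted equivalence.

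As for the main obstacle: there is in fact very little to overcome once one makes the two key observations, namely that $\exp((\zet_2)^{m+n})=2$ and that evaluating $f$ on powers of two makes $2r\,\min\{f(2^m),f(2^n)\}$ collapse to $2^m$, which is strictly below $\lcm(2^m,2^n)=2^n$ for every $m<n$. The one spot that invites a double-check is the borderline case $n=m+1$, where the gap between $2^m$ and $2^n$ is smallest and where one might worry that a finer argument (say, a periodicity analysis of the labels along the main diagonal) would be required; but the inequality $2^m<2^n$ is still strict, so Theorem~\ref{thm:main-f} applies unchanged and no ad hoc treatment is needed. Note also that Theorem~\ref{thm:gcd} on its own would not suffice, since $2r\gcd(2^m,2^n)=2^{m+2}$ beats $2^n$ only when $n\ge m+3$; it is Theorem~\ref{thm:main-f}, applied with the $f$-bound, that carries the whole argument.
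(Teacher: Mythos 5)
Your proposal is correct and follows essentially the same route as the paper: the existence half is delegated to Theorem~\ref{thm:C_2^n_by_Z_2}, and the non-existence half computes $r=\exp((\zet_2)^{m+n})=2$ and $f(2^m)=2^{m-2}$ to get $2r\min\{f(2^m),f(2^n)\}=2^m<2^n=\lcm(2^m,2^n)$ and then invokes Theorem~\ref{thm:main-f}. Your closing remark that Theorem~\ref{thm:gcd} alone would only cover $n\ge m+3$ is a correct and worthwhile observation, but it does not change the argument.
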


\begin{proof}
	When $m=n$, then we are done by Theorem~\ref{thm:C_2^n_by_Z_2}. Suppose that $m\neq n$. Without loss of generality we can assume that $2\leq m<n$; then $\min\{f(2^m),f(2^n),\gcd(2^m,2^n)\}=2^{m-2}$, whereas $\lcm(2^m,2^n)=2^n$. Observe that the exponent of $(\zet_2)^{m+n}$ is $r=2$. Therefore $2r\min\{f(m),f(n)\}=2^{m} <2^{n}=\lcm(m,n)$.
\end{proof}

\section{Conclusion}

We made some progress towards the full characterization of Abelian groups $\Gamma$ such that $|\Gamma|=mn$ and there exists  a $\Gamma$-distance magic labeling of $C_m\square C_n$. 

We improved a previous bound for existence of such labeling, showing that if $\Gamma$ has a cyclic subgroup of order $\lcm(m,n)/2$, then $C_m\square C_n$ is $\Gamma$-distance magic, lowering the bound from $\lcm(m,n)$.

On the other hand, we have shown that groups with an exponent $\exp(\Gamma)$ that is relatively small compared with $\lcm(m,n)$ do not admit such labeling.

Since we found necessary conditions for existence of such labeling (Theorem~\ref{thm:main-f} and~\ref{thm:gcd}), which in some cases were also sufficient (Observation~\ref{gdc}), we post now the following conjecture.

\begin{conj}
	Let $\Gamma$ be an Abelian group of an even order $mn$ with exponent $r$.  There exists a $\Gamma$-distance magic labeling of the Cartesian product   $C_{m} \square  C_{n}$ if and only if $2r\min\{f(m),f(n),\gcd(m,n)\} \ge\lcm(m,n)$.
\end{conj}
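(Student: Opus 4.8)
I would treat the two directions separately: necessity is essentially already established in the paper, and sufficiency is the substantial (and genuinely open) part. For necessity, suppose a $\Gamma$-distance magic labeling of $C_m\square C_n$ exists and set $k=\min\{f(m),f(n),\gcd(m,n)\}$. If $2rk<\lcm(m,n)$ then either $2r\min\{f(m),f(n)\}<\lcm(m,n)$ or $2r\gcd(m,n)<\lcm(m,n)$; in the former Theorem~\ref{thm:main-f} forbids the labeling and in the latter Theorem~\ref{thm:gcd} does, a contradiction. Hence $2rk\ge\lcm(m,n)$, which is the ``only if'' direction, obtained with no further effort.

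For sufficiency I would first clear the cases already settled by the existence results, checking that the threshold holds automatically in each. When $\gcd(m,n)=1$ we have $\min\{f(m),f(n),\gcd(m,n)\}=1$, so the condition becomes $r\ge\lcm(m,n)/2=mn/2$, which is satisfied exactly by $\zet_{mn}$ and $\zet_2\times\zet_{mn/2}$ — precisely the groups for which Observation~\ref{gdc} constructs a labeling. When $m=n$ is even we have $\lcm=\gcd=n$ and $\min=f(n)\in\{n/2,n/4\}$, so $2rf(n)\ge n$ holds for every admissible $\Gamma$ (using $r\ge2$), and Theorem~\ref{thm:C_n x C_n} provides the labeling. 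The genuinely new case is therefore $m\neq n$ with $\gcd(m,n)>1$.

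In that case I would adapt the diagonal constructions behind Theorems~\ref{thm:lcm/2} and~\ref{thm:C_n x C_n}. Fixing $h\in\Gamma$ of maximal order $r$, setting $H=\langle h\rangle$, and partitioning the coset representatives of $\Gamma/H$ into sum-constant pairs as in those proofs, the goal is to label the $\gcd(m,n)$ diagonals so that consecutive-label sums along each diagonal alternate between two values differing by $h$, while passing from one diagonal to the next shifts labels by coset representatives; one then verifies that every vertex weight reduces to a common $\mu$. The inequality $2rk\ge\lcm(m,n)$ is exactly the condition under which the period-$2r$ alternating walk along a diagonal of length $\lcm(m,n)$ closes up consistently — it is the constructive counterpart of the telescoping contradiction driving Theorems~\ref{thm:main-f} and~\ref{thm:gcd} — and it is what keeps enough distinct group elements available to make $\ell$ a bijection.

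The hard part, and the reason the statement is posed as a conjecture rather than a theorem, is exhibiting one such construction uniformly for \emph{every} Abelian $\Gamma$ of order $mn$ meeting the threshold. The delicate regime is when $\exp(\Gamma)=r$ is far smaller than $\lcm(m,n)$ yet the inequality survives through a large $\gcd(m,n)$ or a large $f(m)$: then the length-$2r$ walk is much shorter than a diagonal, forcing one to interleave the walks across the $\gcd(m,n)$ diagonals — in the spirit of the four-step inter-diagonal recursion used for $C_n\square C_n$ — while simultaneously preserving injectivity and the constant-weight property. Establishing bijectivity uniformly, across all invariant-factor types of $\Gamma$ and all residues of $m$ and $n$ modulo $4$, is the principal obstacle, and I would expect the argument to proceed by a careful case analysis organized by the structure of $\Gamma$ and by $\gcd(m,n)$ rather than through a single closed-form labeling.
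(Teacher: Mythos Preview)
The statement you are addressing is a \emph{conjecture}, not a theorem: the paper offers no proof of it and explicitly poses it as open in the Conclusion. So there is no ``paper's own proof'' to compare against.

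Your treatment of the necessity direction is correct and is exactly what the paper already establishes implicitly: if $2r\min\{f(m),f(n),\gcd(m,n)\}<\lcm(m,n)$, then the minimum is attained either at $\min\{f(m),f(n)\}$ or at $\gcd(m,n)$, and Theorem~\ref{thm:main-f} or Theorem~\ref{thm:gcd} respectively rules out a labeling. Your verification that the threshold is automatically met in the settled cases ($\gcd(m,n)=1$ via Observation~\ref{gdc}; $m=n$ even via Theorem~\ref{thm:C_n x C_n}) is also accurate.

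For sufficiency, what you have written is a strategic outline, not a proof --- and you say so yourself. The heuristic that the inequality $2rk\ge\lcm(m,n)$ is ``the constructive counterpart of the telescoping contradiction'' is a reasonable intuition, but the step from that intuition to an explicit bijection $\ell$ with constant weight, uniformly over all invariant-factor types of $\Gamma$ and all residue classes of $m,n$ modulo $4$, is precisely the unresolved content of the conjecture. Your proposal does not close that gap, nor does the paper; this remains open.
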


\bibliographystyle{plain}

\begin{thebibliography}{MMM}
\small
\bibitem{ref_AnhCicPetTep1}  M. Anholcer, S. Cichacz, I. Peterin, A. Tepeh, \textit{Group distance magic labeling of direct product of graphs}, {Ars Math. Contemp.} \textbf{9} (2015), 93--108.

\bibitem{ref_CicAus}  S. Cichacz, \emph{Group distance magic labeling of some cycle-related graphs}, Australas. J. Combin. \textbf{57} (2013), 235--243.


\bibitem{CicFro} S. Cichacz, D. Froncek, \textit{Distance magic circulant graphs},  Discrete Math. \textbf{339(1)} (2016), 84--94.





\bibitem{Diest} R.~Diestel, \emph{Graph Theory, Third Edition}, Springer-Verlag, Heidelberg
 Graduate Texts in Mathematics, Volume 173, New York, 2005

\bibitem{Fro1} D. Froncek, \emph{Group distance magic labeling of Cartesian product of cycles}, Australas. J. Combin., \textbf{55} (2013), 167--174.


\bibitem{IK} W. Imrich and S. Klav\v{z}ar, Product Graphs: Structure and
Recognition (John Wiley \& Sons, New York, 2000).

\bibitem{RSP}S.B. Rao, T. Singh and V. Parameswaran, \emph{Some sigma labelled graphs I}, In
Graphs, Combinatorics, Algorithms and Applications, eds. S. Arumugam, B.D.
Acharya and S.B. Rao, Narosa Publishing House, New Delhi, (2004), 125--133.

\bibitem{ref_Sed}
J. Sedl\'{a}\v{c}ek, \emph{Problem 27. In: Theory of Graphs and Its Applications} (M. Fiedler, ed.). Praha 1964, 163--164.



\bibitem{ref_Sta}
R.P. Stanley, \emph{Linear homogeneous Diophantine equations and magic labelings of graphs},
Duke Math. J. \textbf{40} (1973),  607--632.


\bibitem{ref_Sta2}
R.P. Stanley, \emph{Magic labeling of graphs, symmetric magic squares, systems
of parameters and Cohen-Macaulay rings},
Duke Math. J. \textbf{43} (1976), 511--531.


\end{thebibliography}

\end{document}